\newtheorem{theorem}{Theorem}[section]
\newtheorem{lemma}[theorem]{Lemma}
\newtheorem{corollary}[theorem]{Corollary}
\newtheorem{conjecture}[theorem]{Conjecture}
\newtheorem{problem}[theorem]{Problem}
\newtheorem{definition}[theorem]{Definition}
\begin{document}
	
\title{
	The number of bounded-degree spanning trees
}
	
\author{
	Raphael Yuster
	\thanks{Department of Mathematics, University of Haifa, Haifa 3498838, Israel. Email: raphael.yuster@gmail.com\,.}
}
	
\date{}
	
\maketitle
	
\setcounter{page}{1}
	
\begin{abstract}
	For a graph $G$, let $c_k(G)$ be the number of spanning trees of $G$ with maximum degree at most $k$.
	For $k \ge 3$, it is proved that every connected $n$-vertex $r$-regular graph $G$ with $r \ge \frac{n}{k+1}$ satisfies
	\vspace*{-2mm}$$
	c_k(G)^{1/n} \ge (1-o_n(1)) r \cdot z_k
	\vspace*{-2mm}$$
	where $z_k > 0$ approaches $1$ extremely fast (e.g. $z_{10}=0.999971$).
	The minimum degree requirement is essentially tight as for every $k \ge 2$ there are connected $n$-vertex $r$-regular graphs $G$ with $r=\lfloor n/(k+1) \rfloor -2$ for which $c_k(G)=0$.
Regularity may be relaxed, replacing $r$ with the geometric mean of the degree sequence
and replacing $z_k$ with $z_k^* > 0$ that also approaches $1$, as long as the maximum degree is at most $n(1-(3+o_k(1))\sqrt{\ln k/k})$. The same holds with no restriction on the maximum degree as long as the minimum degree is at least $\frac{n}{k}(1+o_k(1))$. 

\vspace*{3mm}
\noindent
{\bf AMS subject classifications:} 05C05, 05C35, 05C30\\
{\bf Keywords:} spanning tree; bounded degree; counting

\end{abstract}

\section{Introduction} 

For a graph $G$, let $c_k(G)$ be the number of spanning trees of $G$ with maximum degree at most $k$ and let $c(G)$ be the number of spanning trees of $G$. Computationally, these parameters are well-understood: Determining $c(G)$ is easy by the classical Matrix-Tree Theorem
which says that $c(G)$ is equal to any cofactor of the Laplacian matrix of $G$,
while determining $c_k(G)$ is NP-hard for every fixed $k \ge 2$.
In this paper we look at these parameters from the extremal graph-theoretic perspective.
The two extreme cases, i.e. $c(G)$ and $c_2(G)$, are rather well-understood.
As for $c(G)$, Grone and Merris \cite{GM-1988} proved that $c(G) \le (n/(n-1))^{n-1}d(G)/2m$ where
$n$ and $m$ are the number of vertices and edges of $G$ respectively, and $d(G)$ is the product of its degrees.
Note that this upper bound is tight for complete graphs.
Alon \cite{alon-1990}, extending an earlier result of McKay \cite{mckay-1983}, proved that if $G$ is a connected $r$-regular graph, then
$c(G) = (r-o(r))^n$. Alon's method gives meaningful results already for $r=3$, where the proof yields
$(1-o_n(1))c(G)^{1/n} \ge \sqrt{2}$. Alon's result was extended by Kostochka \cite{kostochka-1995} to arbitrary connected graphs with minimum degree $r \ge 3$.
He proved that $c(G) \ge d(G)r^{-nO(\ln r/r)}$ and
improved the aforementioned case of $3$-regular graphs showing that
$(1-o_n(1))c(G)^{1/n} \ge 2^{3/4}$ and that the constant $2^{3/4}$ is optimal.
We mention also that Greenhill, Isaev, Kwan, and McKay \cite{GIKM-2017} asymptotically determined
the expected number of spanning trees in a random graph with a given sparse degree sequence.

The case $c_2(G)$ (the number of Hamilton paths) has a significant body of literature.
All of the following mentioned results hold, in fact, for counting the number of Hamilton {\em cycles}.
First, we recall that there are connected graphs with minimum degree $n/2-1$ for which $c_2(G)=0$,
so most results concerning $c_2(G)$ assume that the graph is {\em Dirac}, i.e. has minimum degree at least 
$n/2$. Dirac's Theorem \cite{dirac-1952} proves that $c_2(G) > 0$ for Dirac graphs.
Significantly strengthening Dirac’s theorem, S\'ark\"ozy, Selkow, and Szemer\'edi \cite{SSS-2003} proved 
that every Dirac graph contains at least $c^n n!$ Hamilton cycles for some small
positive constant $c$. They conjectured that $c$ can be improved to $1/2-o(1)$.
In a breakthrough result, Cuckler and Kahn \cite{CK-2009} settled this conjecture
proving that every Dirac graph with minimum degree $r$ has at least
$(r/e)^n(1-o(1))n$ Hamilton cycles. This bound is tight as shown by an appropriate random graph.
Bounds on the number of Hamilton cycles in Dirac graphs expressed in terms of maximal regular spanning subgraphs were obtained by Ferber, Krivelevich, and Sudakov \cite{FKS-2017}. Their bound matches
the bound of Cuckler and Kahn for graphs that are regular or nearly regular.

In this paper we consider $c_k(G)$ for fixed $k \ge 3$. Observe first that
$c_k(G)^{1/n} \le c(G)^{1/n} < d(G)^{1/n}$ (by simple counting or by the aforementioned result \cite{GM-1988}). Thus, we shall express the lower bounds for $c_k(G)^{1/n}$ in our theorems in terms of
constant multiples of $d(G)^{1/n}$. Notice also that if $G$ is $r$-regular, then $d(G)^{1/n}=r$.

Our first main result concerns connected regular graphs.
It is not difficult to prove that every connected $r$-regular graph with $r \ge n/(k+1)$ has $c_k(G) > 0$
(this also holds for $k=2$ \cite{CS-2013}). We prove that $c_k(G)$ is, in fact, already very large under this minimum degree assumption. To quantify our lower bound we define the following functions of $k$.
$$
f_k = 1-\frac{1}{e}\sum_{i=0}^{k-3}\frac{1}{i!}\;, \qquad g_k = \frac{2}{e(k-1)!}\;.
$$
$$
z_k = 
\begin{cases}
	0.0494, & \text{for } k=3\\
	0.1527, & \text{for } k=4 \\
	(1 - (k+1)(f_k+g_k))^{g_k}(1-g_k)^{1-g_k}{g_k}^{g_k}, & \text{for } k \ge 5\;.
\end{cases}
$$
It is important to observe that $z_k$ approaches $1$ extremely quickly, as Table \ref{table:1}
shows.
\begin{table}
	\vspace{1cm}
	\centering
	\begin{tabular}{|c|c|c|c|c|c|c|c|}
		\hline
		$k$ & $5$ & $6$ & $7$ & $8$ & $9$ & $10$ & $11$\\
		\hline
		$z_k$ & $0.843148$ & $0.962200$ & $0.991935$ & $0.998565$ & $0.999783$ & $0.999971$ & $0.999997$ \\
		\hline
	\end{tabular}
	\caption{The value of $z_k$ for some small $k$.}
	\label{table:1}
\end{table}

\begin{theorem}\label{t:regular}
	Let $k \ge 3$ be given. Every connected $n$-vertex $r$-regular graph $G$ with $r \ge \frac{n}{k+1}$ satisfies
	$$
	c_k(G)^{1/n} \ge (1-o_n(1)) r \cdot z_k\;.
	$$
\end{theorem}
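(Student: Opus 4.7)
The plan is to prove the bound via a randomized construction of $k$-bounded spanning trees combined with an entropy-style counting argument. Under the hypothesis $r \ge n/(k+1)$, it is classical (via Win-type theorems on tough regular graphs) that $G$ admits at least one spanning tree of maximum degree at most $k$; the goal is to upgrade existence into an exponentially large family. The shape of $z_k$ is highly informative: the binomial-entropy factor $g_k^{g_k}(1-g_k)^{1-g_k}$ strongly suggests selecting a random subset $S \subseteq V(G)$ of size $\lfloor g_k n \rfloor$, while the Poisson-type quantities $f_k = \Pr(\mathrm{Poi}(1) \ge k-2)$ and $g_k = 2\Pr(\mathrm{Poi}(1)=k-1)$ point to a degree-profile analysis in which each vertex's degree in a typical spanning tree behaves like $1+\mathrm{Poi}(1)$, consistent with the average tree degree being $\approx 2$.

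Concretely, I would first select a random subset $S \subseteq V(G)$ of density $g_k$, declaring vertices of $S$ to be allowed to attain degree up to $k$ and vertices outside $S$ to be capped at $k-1$. The number of such subsets is $\binom{n}{g_k n} \sim \bigl(g_k^{g_k}(1-g_k)^{1-g_k}\bigr)^{-n}$, which, after inverting, contributes precisely the $g_k^{g_k}(1-g_k)^{1-g_k}$ factor in $z_k$. I would then grow a spanning tree of $G$ respecting these degree caps via a vertex-by-vertex procedure, picking at each step a uniformly random allowed edge; the hypothesis $r \ge n/(k+1)$ guarantees $\ge r(1-o_n(1))$ eligible neighbours at a typical step, producing the multiplicative $r$ factor. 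The factor $(1-(k+1)(f_k+g_k))^{g_k}$ should then emerge as the probability that, when the $\approx g_k n$ vertices of $S$ make their final edge choices, those choices avoid the $\approx (k+1)(f_k+g_k) n$ already-saturated or near-saturated vertices in their neighbourhoods (the density $f_k+g_k$ of such vertices multiplied by the degree $r \approx n/(k+1)$ yields exactly the $(k+1)(f_k+g_k)$ expression after cancellation).

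The main obstacle will be controlling overcounting: many different random seeds could produce the same spanning tree, and a loose bound on this multiplicity would cost an exponential factor. This is typically handled by a reconstruction argument, bounding the number of seeds that can yield a fixed tree $T$ by a subexponential quantity and thus injecting the random process into the set of $k$-bounded spanning trees. A closely related difficulty is that the Poisson heuristic behind $f_k,g_k$ is only a heuristic; one needs either a coupling to an independent branching process or a switching/interchange argument to make the Poisson-tail bounds rigorous for the actual randomized construction on $G$. Finally, the explicit values $z_3=0.0494$ and $z_4=0.1527$, stated outside the $k \ge 5$ formula, will require a separate ad hoc treatment — perhaps starting from a Hamilton path (max degree $2$) and counting carefully chosen local modifications — since the Poisson-tail optimisation underlying the $k \ge 5$ expression fails to yield a positive constant when $k \in \{3,4\}$ (there already $(k+1)(f_k+g_k) > 1$).
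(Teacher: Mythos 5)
Your reverse-engineering of the constants is on target -- $f_k$ and $g_k$ are indeed Poisson tail quantities coming from the in-degree distribution, the factor $(1-(k+1)(f_k+g_k))^{g_k}$ does arise as the probability that roughly $g_k n$ ``second-round'' vertices avoid nearly saturated neighbours, and the entropy factor does come from a weighting over which vertices act in the second round. But the proposal is missing the counting engine that turns these heuristics into a lower bound on $c_k(G)$, and this is precisely where the work lies. The paper does not grow a tree vertex by vertex; it works with out-degree-one orientations (each vertex independently picks a random out-neighbour), of which there are exactly $r^n$, and lower-bounds the probability that a random orientation has all in-degrees at most $k-1$, few vertices of in-degree $k-1$, and at most $\ln n$ components. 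This is done with a two-stage model: the first orientation yields, with constant probability, a spanning forest of in-degree at most $k-2$ missing only about $g_k n$ out-edges (via a Markov/Azuma argument on the Poisson-like in-degrees), and the second orientation completes the missing out-edges while avoiding saturated vertices, giving the $(1-(k+1)(f_k+g_k)-o(1))^{g_k n}$ factor. The $g_k^{g_k}(1-g_k)^{1-g_k}$ term is the price $p_1^{x}p_2^{n-x}$ of forcing the final uniform orientation to agree with the two designated stages (optimised at $p_2=g_k$); it is not the entropy of choosing a subset $S$ of vertices allowed degree $k$, and your ``after inverting'' step has no mechanism behind it -- summing constructions over subsets $S$ would require controlling how many subsets produce the same tree, which is exactly the unsolved multiplicity problem you flag. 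For a vertex-by-vertex tree growth that multiplicity is genuinely exponential, whereas in the orientation model each forest corresponds to only $n^{O(\ln n)}$ orientations because there are at most $\ln n$ components.

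Two further gaps. First, a random orientation gives a bounded-degree spanning \emph{forest} with cycles broken, not a spanning tree; the paper needs a separate deterministic ``edit-distance'' lemma (Section 2), whose case analysis uses the minimum degree $n/(k+1)$ and the bound $n/(7k)$ on the number of degree-$k$ vertices, to show such a forest is within $O(\ln n)$ edge edits of a spanning tree of maximum degree $k$. Nothing in your proposal plays this role, and without it the construction can stall or disconnect. Second, your suggested ad hoc treatment of $k\in\{3,4\}$ via a Hamilton path cannot work: for $k=3$ the hypothesis is only $r\ge n/4$, well below any threshold guaranteeing a Hamilton path. The paper instead handles $k=3,4$ by iterating the nibble in a $K$-stage model ($K=20$ and $K=5$ respectively), building the orientation in many rounds so that the fraction of unmatched vertices decays along the sequence $q_{i+1}=q_i e^{-q_i}$, which is what produces $z_3=0.0494$ and $z_4=0.1527$.
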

\noindent
The requirement on the minimum degree in Theorem \ref{t:regular} is essentially tight.
In Subsection \ref{sub:construct} we show that for every $k \ge 2$ and for infinitely many $n$,
there are connected $r$-regular graphs $G$ with $r=\lfloor n/(k+1) \rfloor -2$ for which $c_k(G)=0$.
In light of this construction, it may be of some interest to determine whether Theorem \ref{t:regular}
holds with $n/(k+1)-1$ instead of $n/(k+1)$. Furthermore, as our proof of Theorem \ref{t:regular} does
not work for $k=2$, we raise the following interesting problem.
\begin{problem}
	Does there exist a positive constant $z_2$ such that every connected $n$-vertex $r$-regular graph $G$ with $r \ge \frac{n}{3}$ satisfies
	$$
	c_2(G)^{1/n} \ge (1-o_n(1)) r \cdot z_2\;.
	$$
\end{problem}

One may wonder whether the regularity requirement in Theorem \ref{t:regular} can be relaxed,
while still keeping the minimum degree at $n/(k+1)$. It is easy to see that a bound on the {\em maximum} degree cannot be entirely waved. Indeed, consider a complete bipartite graph with one part of order $(n-2)/k$.
It is connected, has minimum degree $(n-2)/k > n/(k+1)$, maximum degree $n-(n-2)/k$ but it clearly does not have any spanning tree with maximum degree at most $k$. However, if we place just a modest restriction on the maximum degree, we can extend Theorem \ref{t:regular}. Let
$$
z^*_k =
\left(1-\frac{1}{7k}\right)^{1-\frac{1}{7k}}\left(\frac{1}{9k}\right)^{\frac{1}{7k}}\;.
$$
It is easy to see that $z^*_k$ approaches $1$. For example, $z^*_{20} > 0.956$.
\begin{theorem}\label{t:nearly-regular}
	There exists a positive integer $k_0$ such that for all $k \ge k_0$ the following holds.
	Every connected $n$-vertex graph $G$ with minimum degree at least $\frac{n}{k+1}$ and maximum degree at most
	$n(1-3\sqrt{\ln k/k})$ satisfies
	$$
	c_k(G)^{1/n} \ge (1-o_n(1)) d(G)^{1/n} \cdot z^*_k\;.
	$$
\end{theorem}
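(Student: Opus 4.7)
The plan is to reduce to the nearly-regular case handled by Theorem \ref{t:regular}. Set $r^* := d(G)^{1/n}$, the geometric mean of the degree sequence. I would look for a connected spanning subgraph $H \subseteq G$ with $\delta(H) \ge n/(k+1)$ and all degrees tightly concentrated around $r^*$, so that $\prod_v d_H(v) \ge ((1-o(1)) r^*)^n$. Any spanning tree of $H$ of maximum degree at most $k$ is also a spanning tree of $G$ with the same property, so a lower bound on $c_k(H)$ transfers immediately to $c_k(G)$.

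To produce $H$ I would use a weighted edge-sampling scheme in which each edge $uv$ is retained with probability $p_{uv} \approx r^*/\max\{d(u),d(v)\}$, saturated at $1$ for vertices of low degree. Standard Chernoff concentration then forces each $d_H(v)$ to lie within a factor $1 \pm O(\sqrt{\ln k / k})$ of its expectation, with failure probability $o(1/n)$ per vertex, so that a union bound over all $n$ vertices leaves $H$ nearly regular with high probability. This is exactly where the maximum-degree hypothesis $\Delta(G) \le n(1-3\sqrt{\ln k/k})$ enters: a vertex of much higher degree would have too many incident edges for the sampled degree to concentrate tightly enough for the union bound to succeed, and the $3\sqrt{\ln k/k}$ slack is the precise Chernoff deviation making the bookkeeping close. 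The explicit constants $1/(7k)$ and $1/(9k)$ in $z^*_k = (1-1/(7k))^{1-1/(7k)}(1/(9k))^{1/(7k)}$ then arise naturally from balancing the typical and tail contributions of vertices in this concentration step, and the gap $z^*_k < z_k$ is exactly the correction incurred when passing from the regular bound in $r$ to the geometric-mean bound in $d(G)^{1/n}$.

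The main obstacle I foresee is the asymmetry forced by the minimum-degree condition: at vertices whose degree in $G$ is already close to $n/(k+1)$, essentially no sparsification is permitted, so $H$ is genuinely non-regular and Theorem \ref{t:regular} cannot be quoted as a black box. I would therefore inspect the proof of Theorem \ref{t:regular} and reorganise it so that the regularity parameter $r$ is replaced, at each vertex, by the local degree $d_H(v)$, and close the argument with a convexity/AM-GM step on $\sum_v \log d_H(v)$ that converts the product of local degrees into the geometric-mean bound $d(G)^{1/n}$. Secondary technical items---verifying connectivity of the sampled $H$ (which should follow from standard expansion-preservation arguments using $\delta(G) \ge n/(k+1)$), the $1 - o_n(1)$ error bookkeeping, and the union bound over all $n$ vertices---are routine once the concentration input above is in place.
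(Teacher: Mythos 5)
Your reduction-to-the-regular-case plan has a genuine gap, and it sits exactly at the point you flag and then defer. A spanning subgraph can only lower degrees, so vertices of degree close to $n/(k+1)$ can never be brought near $r^*=d(G)^{1/n}$; when the degree sequence is spread out (say half the vertices of degree $\delta=n/(k+1)$ and half of degree $\Delta=n/2$), any $H$ you produce has $d(H)^{1/n}\approx\delta^{3/4}\Delta^{1/4}$, which is below $d(G)^{1/n}=\sqrt{\delta\Delta}$ by a factor of order $(\delta/\Delta)^{1/4}\approx k^{-1/4}$, so even a perfect theorem for nearly regular $H$ cannot return the claimed bound $(1-o_n(1))\,d(G)^{1/n}z^*_k$ with $z^*_k\to 1$. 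Worse, with $p_{uv}\approx r^*/\max\{d(u),d(v)\}$ a minimum-degree vertex whose neighbors are mostly high-degree retains only about $\delta r^*/\Delta\ll n/(k+1)$ edges, destroying the minimum-degree hypothesis that the whole extension machinery (Lemma \ref{l:extend-reg}, Corollary \ref{coro:extend-reg}, Lemma \ref{l:completing}) requires — without it $c_k(H)$ may even be $0$. Your fallback, ``replace $r$ by the local degree in the proof of Theorem \ref{t:regular} and finish with AM--GM,'' is not a routine repair: the $k\ge 5$ argument rests on the in-degree of each vertex in a random out-degree-one orientation being essentially Poisson with mean $1$ (Lemmas \ref{l:prob-indeg}--\ref{l:red-forest}, and the constants $f_k,g_k$), a genuinely regular phenomenon; once $\Delta/\delta$ can be as large as roughly $k$, the expected in-degree of a vertex can approach $k$ and that analysis collapses. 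Supplying the replacement is the actual content of the theorem, not a secondary item.

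This is precisely where the paper's proof diverges, and it also shows your reading of the maximum-degree hypothesis is off: it is not there to make sampled degrees concentrate in a sparsification. The paper works directly on $G$ with the two-stage model; the factor $d(G)^{1/n}$ comes for free from $|{\cal H}(G)|=d(G)$ via Lemma \ref{l:completing}, so no regularization is attempted. The condition $\Delta\le n(1-3\sqrt{\ln k/k})$ enters in Lemma \ref{l:prob-indeg-2}: the in-degree of $v$ is dominated by $\mathrm{Bin}(\Delta,1/\delta)$ with mean $(k+1)(1-3\sqrt{\ln k/k})$, i.e. about $3\sqrt{k\ln k}$ below $k-2$, which is exactly the Chernoff deviation needed to get $\Pr[D_v\ge k-2]\le k^{-4}$. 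This controls the number of ``removable edges,'' yields a spanning forest of $\vec G_1$ with in-degrees at most $k-2$ and at least $n-n/(7k)$ edges (Lemma \ref{l:red-forest-2}), and the completion step (Lemma \ref{l:successful-2}) with $p_2=1/(7k)$ gives $z^*_k$; the $1/(9k)$ merely absorbs the $(5/6)^{n/(7k)}$ completion probability into $(1/(7k))^{n/(7k)}$, not a typical-versus-tail balance. To repair your write-up you would need to abandon the sparsification reduction and give an argument of this direct kind for non-regular $G$.
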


Finally, we obtain a lower bound on $c_k(G)$ where we have no restriction on the maximum degree of $G$. Analogous to Dirac's theorem, Win \cite{win-1975} proved that every connected graph
with minimum degree $(n-1)/k$ has $c_k(G) > 0$ (see also \cite{CFHKT-2001} for an extension of this result). Clearly, the requirement on the minimum degree is tight as the aforementioned example of a complete bipartite graph shows that there are connected graphs with minimum degree $(n-2)/k$
for which $c_k(G)=0$. We prove that for all $k \ge k_0$, if the minimum degree is just slightly larger, then $c_k(G)$ becomes large.

\begin{theorem}\label{t:non-regular}
	There exists a positive integer $k_0$ such that for all $k \ge k_0$ the following holds.
	Every connected $n$-vertex graph $G$ with minimum degree at least $\frac{n}{k}(1+3\sqrt{\ln k/k})$ satisfies
	$$
	c_k(G)^{1/n} \ge (1-o_n(1)) d(G)^{1/n} \cdot z^*_k\;.
	$$
\end{theorem}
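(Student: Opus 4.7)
The plan is to reduce Theorem \ref{t:non-regular} to Theorem \ref{t:nearly-regular} by constructing a spanning subgraph $H\subseteq G$ that meets Theorem \ref{t:nearly-regular}'s hypotheses and preserves the geometric mean of degrees up to a factor $1-o_n(1)$. Since $c_k(G)\ge c_k(H)$, such an $H$ would immediately yield the bound. Set $D:=n(1-3\sqrt{\ln k/k})$, the maximum-degree threshold appearing in Theorem \ref{t:nearly-regular}, and let $S:=\{v\in V(G):d_G(v)>D\}$ be the set of ``over-sized'' vertices, which are precisely the ones obstructing a direct application of Theorem \ref{t:nearly-regular}.

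The construction of $H$ exploits the observation that each $v\in S$ has fewer than $n-D=3n\sqrt{\ln k/k}$ non-neighbours in $G$, so at least $|S|-3n\sqrt{\ln k/k}$ of its neighbours lie in $S$. When $|S|$ is appreciably larger than $3n\sqrt{\ln k/k}$, $G[S]$ is dense enough that a standard degree-prescribed-subgraph argument---for instance, iteratively deleting an edge between two currently over-sized vertices, or invoking an Erd\H{o}s--Gallai-type condition---produces a subgraph $F\subseteq G[S]$ with $d_F(v)=d_G(v)-D$ for every $v\in S$. Taking $H:=G\setminus F$ then gives $\Delta(H)\le D$, $d_H(v)=d_G(v)$ for $v\notin S$, and $\delta(H)\ge\min\{\delta(G),D\}\ge n/(k+1)$ for sufficiently large $k$. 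When $|S|\le 3n\sqrt{\ln k/k}$, edges across the $(S,V\setminus S)$ cut must also be used; the total excess is $O(|S|\cdot n\sqrt{\ln k/k})=o(n^2)$, which can be spread thinly across $V\setminus S$ using the slack $\delta(G)-n/(k+1)=\Theta(n\sqrt{\ln k/k}/k)$ available there.

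The principal obstacle is the comparison
\[
\left(\frac{d(G)}{d(H)}\right)^{1/n}\;\le\;\left(\frac{n}{D}\right)^{|S|/n}\;=\;(1-3\sqrt{\ln k/k})^{-|S|/n},
\]
which is $1+o_n(1)$ precisely when $|S|=o(n)$; in that sub-case Theorem \ref{t:nearly-regular} applied to $H$ completes the proof. The genuinely hard regime is $|S|=\Theta(n)$: not only does the above ratio remain bounded away from $1$, but if $d(G)^{1/n}$ itself exceeds $D$ then \emph{no} spanning subgraph of maximum degree at most $D$ can preserve $d(G)^{1/n}$ even approximately, so Theorem \ref{t:nearly-regular} alone cannot suffice. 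I would address this by switching to Theorem \ref{t:regular}: when a positive fraction of vertices has degree above $D$, the complement of $G$ is sparse on that portion, and via a Petersen-type factor theorem one can extract an $r$-regular spanning subgraph $G'$ of $G$ with $r$ close to $\delta(G)\ge (n/k)(1+3\sqrt{\ln k/k})$. Using that $z_k\ge z_k^\ast$ for all large $k$, Theorem \ref{t:regular} applied to $G'$ yields the desired bound whenever $r$ can be chosen close enough to $d(G)^{1/n}$. A case split on $|S|$ calibrated so that each regime falls inside the scope of the corresponding theorem, with an additional balancing argument for the intermediate range, then completes the proof.
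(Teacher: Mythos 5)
Your reduction to Theorem \ref{t:nearly-regular} only works in the regime $|S|=o(n)$, and your proposed rescue of the complementary regime does not hold up. First, the extraction of an $r$-regular spanning subgraph $G'$ with $r$ close to $\delta(G)$ is not available under the hypotheses of Theorem \ref{t:non-regular}: the unbalanced complete bipartite graph $K_{\delta^*,\,n-\delta^*}$ with $\delta^*=\frac{n}{k}(1+3\sqrt{\ln k/k})$ is connected, satisfies the minimum-degree hypothesis, has $|S|=\delta^*=\Theta(n)$ vertices of degree exceeding $n(1-3\sqrt{\ln k/k})$, and has \emph{no} nontrivial regular spanning subgraph (a regular spanning subgraph of a bipartite graph forces equal part sizes). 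Second, even when such a $G'$ exists, Theorem \ref{t:regular} gives a bound of the form $(1-o_n(1))r\cdot z_k$ with $r\le\delta(G)$, whereas the target is $(1-o_n(1))d(G)^{1/n}\cdot z^*_k$; when a positive fraction of vertices have degree near $n$ while others have degree near $n/k$, $d(G)^{1/n}$ exceeds $\delta(G)$ by a factor bounded away from $1$ (indeed growing with $k$ in examples), so the constant $z^*_k$ is not recovered. Your "balancing argument for the intermediate range" is also not supplied: in the case $|S|\le 3n\sqrt{\ln k/k}$ the excess degree that must be shed across the cut can be of order $n^2\ln k/k$, while the total slack above $n/(k+1)$ guaranteed by the minimum degree is only of order $n^2\sqrt{\ln k}/k^{3/2}$, so the claimed "spreading" does not obviously fit; likewise the degree-prescribed subgraph $F\subseteq G[S]$ with $d_F(v)=d_G(v)-D$ is a factor-type existence claim that is asserted rather than proved.

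The paper avoids all of this by not modifying $G$ at all: Theorems \ref{t:nearly-regular} and \ref{t:non-regular} are proved simultaneously, because the only place the maximum-degree cap is used is in bounding in-degrees of a random out-degree one orientation (Lemma \ref{l:prob-indeg-2}). Under the stronger minimum degree $\delta^*=\frac{n}{k}(1+3\sqrt{\ln k/k})$, the in-degree of any vertex is stochastically dominated by $\mathrm{Bin}(n,1/\delta^*)$, whose mean is still $k(1-o_k(1))$, so the same Chernoff estimate bounds the number of removable edges and the rest of the argument (Lemmas \ref{l:red-forest-2}, \ref{l:successful-2}, \ref{l:nearly-regular} and Lemma \ref{l:completing}) goes through verbatim. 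If you want a reduction-style proof, you would at minimum need a new idea for the $|S|=\Theta(n)$ case that preserves the geometric mean of the degree sequence, which is exactly what deleting edges at high-degree vertices, or passing to a regular spanning subgraph, cannot do.
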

Using Szemer\'edi's regularity lemma, it is not too difficult to prove a version of Theorem \ref{t:non-regular} that works already for $k \ge 3$ and where $c_k(G)$ is exponential in $n$. However, the bound we can obtain by that method, after taking its $n$-th root, is not a positive constant multiple of $d(G)^{1/n}$. We do conjecture that the error term in the minimum degree assumption can be eliminated.
\begin{conjecture}
	Let $k \ge 3$. There is a constant $z^\dagger_k > 0$ such that
	every connected $n$-vertex graph $G$ with minimum degree at least $\frac{n}{k}$ satisfies
	$$
	c_k(G)^{1/n} \ge (1-o_n(1)) d(G)^{1/n} \cdot z^\dagger_k
	$$
	where $\lim_{k \rightarrow \infty} z^\dagger_k = 1$.
\end{conjecture}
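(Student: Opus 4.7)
The conjecture removes the $(1+3\sqrt{\ln k/k})$ slack from Theorem~\ref{t:non-regular}, pushing the hypothesis down to the essentially tight $\delta(G) \ge n/k$. I would pursue a stability dichotomy: show that every connected $G$ with $\delta(G) \ge n/k$ either (a) is \emph{near-extremal}, i.e.\ admits a partition $V(G)=A\cup B$ with $|A|=(1+o(1))n/k$ across which almost all edges run, or (b) is \emph{robust}, meaning it contains an induced subgraph $G'$ on $(1-o(1))n$ vertices with $\delta(G') \ge (1+3\sqrt{\ln k/k})|V(G')|/k$. In case (b) one applies Theorem~\ref{t:non-regular} directly to $G'$ and extends each of its bounded-degree spanning trees to one of $G$ by attaching the discarded vertices as leaves, finding unsaturated hosts via $\delta(G) \ge n/k$; the extension loses only a factor $2^{-o(n)}$ in the $n$th root, so the resulting constant $z^\dagger_k$ inherits the $z^*_k \to 1$ behavior as $k \to \infty$.

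\textbf{Extremal case.} In case (a), count spanning trees of maximum degree $\le k$ by analysing the bipartite-dominated regime. Since $|A|=(1+o(1))n/k$ and almost all edges cross the partition, almost every $b\in B$ must be a leaf whose unique tree-neighbour is in $A$, and each $a\in A$ has tree-degree at most $k$. Such trees correspond, up to a small correction coming from intra-$A$ and intra-$B$ edges, to ordered partitions of $B$ into $|A|$ classes of size at most $k-1$, together with a choice of spanning subtree on $A$ using the remaining degree slots. A Prüfer-sequence computation in the spirit of Scoins's formula, or equivalently a Schrijver-type permanent lower bound applied to the bipartite adjacency matrix between $A$ and $B$ (whose row and column sums are each at least $n/k$), yields the count $(d(G)^{1/n})^n (1-o(1))^n$ with a multiplicative factor $z^\dagger_k \to 1$ that absorbs the entropy deficit due to the bounded-degree constraint.

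\textbf{Main obstacle.} The principal difficulty is the stability extraction in case (b): known techniques for promoting $\delta \ge n/k$ to a strictly stronger hypothesis on a subgraph typically lose exactly the $\sqrt{\ln k/k}$ factor one is trying to eliminate, so a finer method must be devised — perhaps an iterative degree-boosting driven by entropy bounds on the low-degree witnesses, or an absorption scheme tailored to the bipartite-bottleneck configuration. A second, independent route worth exploring is to combine Kostochka's theorem $c(G)^{1/n} \ge (1-o(1)) d(G)^{1/n}$ with a concentration inequality for degrees in the uniformly random spanning tree, exploiting its determinantal negative-correlation structure to show $\Pr[\Delta(T) > k] \ge 2^{-o(n)}$; the key technical question there is whether the concentration holds uniformly across the degree sequence, since vertices of very high degree in $G$ can inflate their expected tree-degree and so break the naïve union bound.
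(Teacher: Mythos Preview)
This statement is labelled a \emph{conjecture} in the paper and is explicitly left open there; the paper offers no proof, only the remark that a weaker exponential lower bound (not of the shape $d(G)^{1/n}\cdot z^\dagger_k$) follows from the regularity lemma. There is therefore nothing in the paper to compare your attempt against, and what you have written is --- as your own ``Main obstacle'' paragraph concedes --- a research plan rather than a proof.

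On the plan itself, two comments. In case~(b) the leaf-attachment step is more delicate than you indicate: the spanning trees of $G'$ supplied by Theorem~\ref{t:non-regular} can have $\Theta(n/k)$ vertices already at degree $k$, while each discarded vertex has only about $n/k$ $G$-neighbours, so you cannot attach it as a leaf by appealing to $\delta(G)\ge n/k$ alone --- you need the quantitative control on degree-$k$ vertices coming from Corollary~\ref{coro:extend-reg} (or its analogue), and you need to check that passing to $G'$ does not destroy that control. In case~(a) the difficulty is sharper than your sketch suggests. When $|A|=(1+o(1))n/k$, the sum of $A$-side tree-degrees is $n-1$, so the \emph{average} $A$-degree is already $k(1-o(1))$; almost every vertex of $A$ is forced to have tree-degree exactly $k$. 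The relevant count is governed by $[x^{b-1}]\bigl(\sum_{j<k}x^j/j!\bigr)^a$ versus $[x^{b-1}]e^{ax}$, and showing that the ratio, after taking $n$th roots, tends to $1$ as $k\to\infty$ is a genuine large-deviations computation, not something that falls out of ``a Pr\"ufer-sequence computation'' or a permanent bound. Your alternative route via negative correlation in the uniform spanning tree runs into the same wall: in the near-bipartite regime each $A$-vertex has expected tree-degree $\approx k$, so the event $\{\Delta(T)\le k\}$ is not in the concentration regime at all. In short, the stability dichotomy is a natural opening move, but both branches currently rest on assertions that are themselves at the level of the conjecture.
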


All of our theorems are based on two major ingredients. The first ingredient consists of proving that $G$ has many spanning forests, each with only a relatively small number of component trees, and each having maximum degree at most $k$. However, the proof of this property varies rather significantly among the various theorems and cases therein.
We combine the probabilistic model of Alon \cite{alon-1990} for showing that there are many {\em out-degree one} orientations with certain properties, together with a novel nibble approach to assemble edges from {\em several} out-degree one orientations.
The second ingredient consists of proving that each of the large spanning forests mentioned above has small ``edit distance'' from a spanning tree with maximum degree at most $k$. Once this is established, it is not difficult to deduce that $G$ has many spanning trees with maximum degree at most $k$.

In Section 2 we prove the edit-distance property. In Section 3 we introduce out-degree one orientations and the {\em multi-stage model} which is the basis for our nibble approach. In Section 4 we consider regular graphs and prove Theorem \ref {t:regular}. In Section 5 we prove Theorems \ref{t:nearly-regular} and
\ref{t:non-regular}.

Throughout the paper we assume that the number of vertices of the host graph, always denoted by $n$, is
sufficiently large as a function of all constants involved. Thus, we refrain from repeatedly mentioning this assumption. We also ignore rounding issues (floors and ceilings) whenever these have no effect on the final statement of our results. We use the terminology {\em $G$-neighbor} of a vertex $v$  to refer to a neighbor of $v$ in $G$, as opposed to a neighbor of $v$ in spanning tree or a spanning forest of $G$.
The notation $d(v)$ always denotes the degree of $v$ in $G$. Other notions that are used are standard,
or defined upon their first use.

\section{Extending a bounded degree forest}\label{sec:extend}

The edit distance between two graphs on the same vertex set is the number of edges in the symmetric difference of their edge sets.
In this section we prove that the edit distance between a bounded degree spanning forest and a bounded degree spanning tree of a graph is proportional to the number of components of the forest, whenever the graph is connected and satisfies a minimum degree condition.

\begin{lemma}\label{l:extend-reg}
	Let $k \ge 3$ and let $G$ be a connected graph with $n$ vertices and minimum degree at least $n/(k+1)$.
	Suppose that $F$ is a spanning forest of $G$ with $m < n-1$ edges and maximum degree at most $k$.
	Furthermore, assume that $F$ has at most $t$ vertices with degree $k$ where
	$t \le n/(6.8k)$.
	Then there exists a spanning forest $F^*$ of $G$ with $m+1$ edges that contains at least $m-3$ edges of $F$.
	Furthermore, $F^*$ has maximum degree at most $k$ and at most $t+4$ vertices with degree $k$.
\end{lemma}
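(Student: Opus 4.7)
\emph{Proof plan.} The plan is to add one edge between two components of $F$, allowing for a small number of edge swaps. Let $B = \{v \in V(G) : d_F(v) = k\}$ denote the set of saturated vertices, so $|B| \le t \le n/(6.8k)$. Since $|E(F)| = m < n-1$, the forest $F$ has at least two components. I proceed by a case split on whether a single good edge suffices.

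\emph{Easy case.} If there is a $G$-edge $\{u, v\}$ between two distinct components of $F$ with both $u, v \notin B$, set $F^* = F \cup \{u, v\}$. Then $F^*$ has $m+1$ edges, contains all $m$ edges of $F$, has maximum $F^*$-degree at most $k$ (since both $u, v$ had $F$-degree at most $k-1$), and at most $|B| + 2 \le t + 2$ saturated vertices. Done.

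\emph{Hard case.} Suppose every $G$-edge between distinct components of $F$ is incident to $B$. The key structural consequence is that for every $v \notin B$, all $G$-neighbors of $v$ outside its own component must lie in $B$, so $v$ has at least $d_G(v) - |B| \ge n/(k+1) - t$ $G$-neighbors inside its own component. Consequently every component containing a non-$B$ vertex has at least $n/(k+1) - t + 1$ vertices, and since for $k \ge 3$ no tree can have every vertex of degree $k$ (the handshake sum $2(|T|-1)$ is strictly less than $k|T|$), every component contains a non-$B$ vertex and thus has size at least $n/(k+1) - t + 1$.

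Now pick a cross $G$-edge $\{a, b\}$ with $a \in B$, $a \in T_a$, $b \in T_b$. If $b \notin B$, choose an $F$-neighbor $x$ of $a$ so that the subtree $T_a^-$ separated from $a$ by deleting $\{a,x\}$ is as small as possible, with preference given to $x$ being a leaf of $T_a$ (so $T_a^- = \{x\}$). Remove $\{a,x\}$ and add $\{a,b\}$; the degrees at $a,b$ stay within bound, while $T_a$ splits into $T_a^+$ (which merges with $T_b$) and $T_a^-$. Pick any leaf $\ell$ of $T_a^-$: it has $d_G(\ell) \ge n/(k+1)$, and by the structural observation at least $n/(k+1) - t$ of its $G$-neighbors lie in the original $T_a$. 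Provided $|T_a^-|$ is sufficiently smaller than $n/(k+1) - t$, at least one such neighbor $y$ lies in $T_a^+$ and satisfies $y \notin B$; adding $\{\ell, y\}$ finishes with one removal and two additions. If $b \in B$, perform a symmetric double swap, removing $F$-edges at both $a$ and $b$ and adding $\{a, b\}$, then apply the same leaf-augmentation on each of the two split-off subtrees; this uses at most $3$ removals and $4$ additions in total.

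\emph{Main obstacle.} The delicate step is bounding $|T_a^-|$ so that the leaf $\ell$ retains a $G$-neighbor outside $T_a^-$ that is also outside $B$. When $a$ has a leaf-neighbor we may take $T_a^- = \{x\}$ and the argument is immediate; otherwise every $F$-neighbor of $a$ has degree $\ge 2$, forcing $|T_a| \ge 2k+1$, and I rely on the abundance of leaves in each large component together with the slack from $t \le n/(6.8k)$ to locate a usable leaf whose split-off subtree is small enough. Once the swap sequence is identified, the bookkeeping is routine: each of the at most four added edges creates at most one new saturated vertex, so $F^*$ has at most $t+4$ vertices of $F^*$-degree $k$, as required.
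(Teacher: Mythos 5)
Your easy case is the paper's Case (a), and your structural observation that in the hard case every component has at least $n/(k+1)-t+1$ vertices is correct and tidy. But the hard case has a genuine gap, and it sits exactly at the step you flag as the ``main obstacle.'' To free degree capacity at the saturated endpoint $a$ you must delete an edge incident to $a$, so $T_a^-$ is forced to be one of the at most $k$ subtrees hanging at $a$; its size is only bounded by $(|T_a|-1)/k$, and no choice of leaf or appeal to ``abundance of leaves'' changes which subtrees are available. For your finishing move you need $|T_a^-|\le n/(k+1)-t$, and this can fail: with just two components, $T_a$ can have size close to $n-(n/(k+1)-t)$ with all $k$ subtrees at $a$ balanced, so every available $T_a^-$ has size about $n/(k+1)+t/k > n/(k+1)-t$. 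This is feasible for every $k\ge 3$ (for $k=3$: $|T_a|\approx 0.75n$, three subtrees of $\approx 0.25n$ each, versus the required bound $\approx 0.2n$), so the slack $t\le n/(6.8k)$ does not rescue the argument, and the symmetric double swap for $b\in B$ inherits the same problem on both sides.

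The paper's proof avoids this by a two-level reduction rather than a direct finish. When the only cross edges out of unsaturated vertices go to saturated vertices (its Cases (c) and (d)), it performs your kind of single swap only to manufacture a component $L_1$ of size less than $n/(k+1)$ (here the bound $(n-|V(T_i)|-1)/k<n/(k+1)$ works because the \emph{other} component is large, not because $T_a$ is small), and then re-enters Case (b). In Case (b) the surgery is different: a low-degree vertex of the small component has two $G$-neighbors $w_1,w_2$ outside, both saturated, and removing \emph{both} $w_1$ and $w_2$ splits off at least $2k-1$ subtrees, so the smallest has at most $n/(2k-1)$ vertices. The decisive inequality is then $n/(k+1)-n/(2k-1)>n/(6.8k)\ge t$ (this is where the constant $6.8$ comes from), which guarantees the unsaturated neighbor you were missing; the reconnection adds an edge inside the big component, closes a cycle through $w_1$, and deletes a cycle edge at $w_1$ while attaching the small component to $w_1$. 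Without this ``two saturated vertices, $2k-1$ subtrees'' idea (or some substitute), your bound on the split-off subtree is quantitatively insufficient, so as written the proof does not go through.
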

\begin{proof}
	For a forest (or tree) with maximum degree at most $k$, its {\em $W$\!-vertices} are those with degree $k$
	and its {\em $U$\!-vertices} are those with degree less than $k$.
	Denote the tree components of $F$ by $T_1,\ldots,T_{n-m}$.
	Let $U_i \neq \emptyset$ denote the $U$\!-vertices of $T_i$ and let $W_i$ denote the $W$\!-vertices of $T_i$. We distinguish between several cases as follows:\\
	(a) There is some edge of $G$ connecting some $u_i \in U_i$ with some $u_j \in U_j$ where $i \neq j$.\\
	(b) Case (a) does not hold but there is some $T_i$ with fewer than $n/(k+1)$ vertices.\\
	(c) The previous cases do not hold but there is some edge of $G$ connecting some $u_i \in U_i$ to a vertex in a different component of $F$.\\
	(d) The previous cases do not hold.
	
	{\em Case (a).} We can add to $F$ the edge $u_iu_j$ obtaining a forest with $m+1$ edges
	which still has maximum degree at most $k$. The new forest has at most $t+2$ $W$\!-vertices
	since only $u_i$ and $u_j$ increase their degree in the new forest.
	
	\begin{figure}
		\includegraphics[scale=0.8,trim=70 340 305 30, clip]{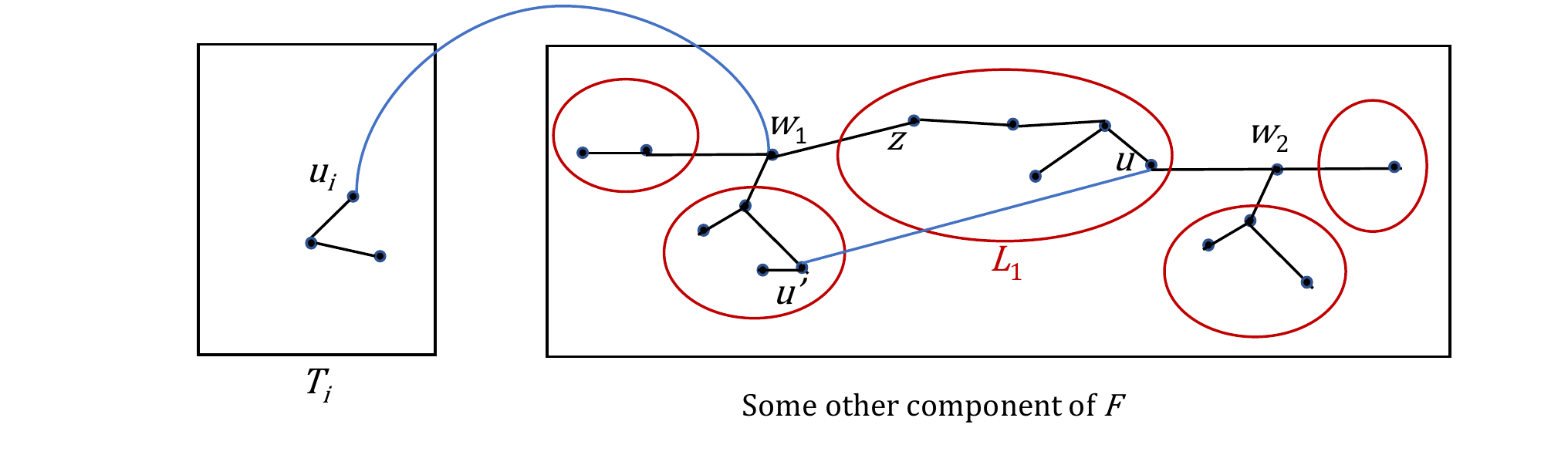}
		\caption{Constructing $F^*$ from $F$ in Case (b) of Lemma \ref{l:extend-reg} (here we use $k=3$). The figure depicts the component $T_i$ containing $u_i$ and some other component containing $w_1$ (and $w_2$ in this example).
			The red ovals depict the various $L_j$'s obtained when removing $w_1$ and $w_2$. The denoted
			$L_1$ contains a vertex $u$ of degree $1$ in $L_1$ (and degree smaller than $3$ in $F$) which has a neighbor $u'$ in $G$ that also has degree smaller than $3$ in $F$. The blue edges represent edges of $G$ that are not used in $F$. To obtain $F^*$ we add $u_iw_1$, add $uu'$ and remove the edge $w_1z$.}
		\label{f:1}
	\end{figure}
	
	{\em Case (b).}
	Let $u_i$ be some vertex with degree $1$ or $0$ in $T_i$ (note that it is possible that $T_i$ is a singleton so that the degree of its unique vertex is indeed $0$ in $T_i$). Since $T_i$ has fewer than $n/(k+1)$ vertices, and since $u_i$ has minimum degree at least $n/(k+1)$
	in $G$, we have that $u_i$ has at least two $G$-neighbors that are not in $T_i$.
	Let $w_1,w_2$ denote such neighbors. Notice that $w_1,w_2$ are $W$\!-vertices of $F$ as we assume Case (a) does not hold.
	
	Assume first that $w_1,w_2$ are adjacent in $F$ (in particular, they are in the same component of $F$). Let $F^*$ be obtained from $F$ by adding both edges $u_iw_1$ and $u_iw_2$ and removing the edge $w_1w_2$. Note that $F^*$ has $m+1$ edges, has $m-1$ edges of $F$, and has maximum degree at most $k$. It also has at most $t+1$ $W$\!-vertices as only $u_i$
	may become a new vertex of degree $k$ (in fact, the degree of $u_i$ in $F^*$ is at most $3$ so if $k > 3$ we still only have $t$ $W$\!-vertices in $F^*$). 
	
	We may now assume that $w_1,w_2$ are independent in $F$.
	Removing both of them from $F$ further introduces at least $2k-1$ component trees
	denoted $L_1,\ldots,L_s$ where $s \ge 2k-1$.
	To see this, observe first that if we remove $w_1$, we obtain at least $k$ nonempty components since $w_1$ has degree $k$. If we then remove $w_2$, we either obtain an additional set of $k$ components
	(if $w_2$ is not in the same component of $w_1$ in $F$) or an additional set of $k-1$ components
	(if $w_2$ is in the same component of $w_1$ in $F$).
	
	Each $L_j$, being a tree, either has at least two vertices of degree $1$, or
	else $L_j$ is a singleton, in which case it has a single vertex with degree $0$ in $L_j$.
	If $L_j$ is a singleton, then its unique vertex has degree at most $2$ in $F$ as it may only be connected in $F$ to $w_1$ and $w_2$. If $L_j$ is not a singleton, then let $v_1,v_2$ be two vertices with degree $1$ in $L_j$. It is impossible for both $v_1,v_2$ to have degree at least $3$ in $F$ as otherwise
	they are both adjacent to $w_1,w_2$ in $F$, implying that $F$ is not a forest (has a $K_{2,2}$).
	In any case, we have shown that each $L_j$ (whether a singleton or not) has a vertex which is
	a $U$\!-vertex of $F$.
	
	Consider now an $L_j$ with smallest cardinality, say $L_1$. Its number of vertices is therefore at most
	\begin{equation}\label{e:k5}
	\frac{n}{s} \le \frac{n}{2k-1}\;.
	\end{equation}
	Let $u$ be a vertex of $L_1$ which is a $U$\!-vertex of $F$.
	By our minimum degree assumption on $G$, $u$ has at least $n/(k+1)-(|V(L_1)|-1)$ neighbors in $G$ that are not in $L_1$. By \eqref{e:k5} we have that
	\begin{equation}\label{e:k5.1}
	\frac{n}{k+1}-(|V(L_1)|-1) \ge \frac{n}{k+1} - \frac{n}{2k-1} >  \frac{n}{6.8k} \ge t\;.
	\end{equation}
	It follows that $u$ has a $G$-neighbor $u'$ not in $L_1$ which is a $U$\!-vertex of $F$.
	Notice that $u$ and $u'$ must be in the same component of $F$ since we assume Case (a) does not hold.
	Since $u'$ is not in $L_1$, adding $uu'$ to $F$ introduces a cycle that contains at least one
	of $w_1,w_2$. Assume wlog that the cycle contains $w_1$ and that $z$ is the neighbor of $w_1$ on the cycle (possibly $z \in \{ u,u' \}$). We can now obtain a forest $F^*$ from $F$ by
	adding $uu'$, adding $u_iw_1$ and removing $w_1z$.
	The obtained forest has $m+1$ edges, has $m-1$ edges of $F$, has maximum degree at most $k$, and at
	most $t+2$ $W$\!-vertices as only $u,u'$ can increase their degree in $F^*$ to $k$.
	Figure \ref{f:1} visualizes $u_i,u,u',w_1,z,L_1$ and the added and removed edges when going from $F$ to $F^*$.
	
	{\em Case (c).}
	In this case, $T_i$ has at least $n/(k+1)$ vertices. Let $w_j \in W_j$ be a $G$-neighbor of $u_i$  in a different component $T_j$ of $F$.
	Removing $w_j$ from $T_j$ splits $T_j \setminus w_j$ into a forest with $k$ component trees $L_1,\ldots,L_k$. So at least one of these
	components, say $L_1$, has at most $(n-|V(T_i)|-1)/k < n/(k+1)$ vertices. Obtain a forest $F^{**}$ from $F$ be adding the edge
	$u_iw_j$ and removing the unique edge of $T_j$ connecting $w_j$ to $L_1$.
	The new forest also has $m$ edges and has $m-1$ edges of $F$. It also has at most $t+1$
	$W$\!-vertices as only $u_i$ may become a new vertex of degree $k$.
	But in $F^{**}$, there is a component, namely $L_1$, with fewer than $n/(k+1)$ vertices.
	Hence, we arrive at either Case (a) or Case (b) for $F^{**}$.
	So, applying the proofs of these cases to $F^{**}$ (and observing that the number of $W$\!-vertices in $F^{**}$ is only $t+1$ so \eqref{e:k5.1} still holds because of the slack in the sharp inequality of \eqref{e:k5.1}),
	we obtain a forest $F^*$ with $m+1$ edges, at least $m-2$ edges of $F$, maximum degree at most $k$,
	and at most $t+3$ $W$\!-vertices.
	
	{\em Case (d).}
	Since $G$ is connected, we still have an edge of $G$ connecting some vertex $w_i \in W_i$ with some $w_j \in W_j$. Without loss of generality, $|V(T_j)|\le n/2$. 
	Removing $w_j$ from $T_j$ splits $T_j \setminus w_j$ into a forest with $k$ component trees $L_1,\ldots,L_k$. So at least one of these components, say $L_1$, has at most
	$|V(T_j)|/k \le n/(2k)$ vertices.
	Let $u$ be a vertex of $L_1$ of degree $1$ in $F$. So, $u$ has at least $n/(k+1)-n/(2k) > n/(6.8k) \ge t$
	neighbors not in $L_1$.
	It follows that $u$ has a $G$-neighbor $u'$ which is a $U$\!-vertex of $F$.
	Also notice that $u' \in T_j$ since we assume Case (a) does not hold.
	Now, let $F^{**}$ be obtained from $F$ by adding the edge $uu'$ and removing
	the unique edge of $T_j$ connecting $w_j$ to $L_1$.
	The new forest also has $m$ edges and has $m-1$ edges of $F$. It also has at most $t+1$
	$W$\!-vertices as only $u'$ may become a new vertex of degree $k$.
	But observe that in $F^{**}$ the degree of $w_j$ is only $k-1$. Since $w_j$ has a $G$-neighbor (namely $w_i$) in a different component of $F^{**}$, we arrive in $F^{**}$  
	at either Case (a) or Case (b) or Case (c).
	So, applying the proofs of these cases to $F^{**}$ (and observing that the number of $W$\!-vertices in $F^{**}$ is only $t+1$ so \eqref{e:k5.1} still holds because of the slack in the sharp inequality of \eqref{e:k5.1}),
	we obtain a forest $F^*$ with $m+1$ edges, at least $m-3$ edges of $F$, maximum degree at most $k$,
	and at most $t+4$ $W$\!-vertices.
\end{proof}

By repeated applications of Lemma \ref{l:extend-reg} where we start with a large forest and repeatedly increase the number of edges until obtaining a spanning tree, we immediately obtain the following corollary.
\begin{corollary}\label{coro:extend-reg}
	Let $k \ge 3$ and let $G$ be a connected graph with $n$ vertices and minimum degree at least $n/(k+1)$.
	Suppose that $F$ is a spanning forest of $G$ with $n-O(\ln n)$ edges and maximum degree at most $k$.
	Furthermore, assume that $F$ has at most $t$ vertices with degree $k$ where
	$t \le n/(7k)$.
	Then there exists a spanning tree of $G$ with maximum degree at most $k$ where all but at most $O(\ln n)$ of its edges are from $F$. \qed
\end{corollary}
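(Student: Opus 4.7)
The natural approach is straightforward iteration of Lemma \ref{l:extend-reg}. Set $F_0 = F$ and, as long as the current forest $F_j$ has fewer than $n-1$ edges, invoke Lemma \ref{l:extend-reg} to produce a forest $F_{j+1}$ with one additional edge that agrees with $F_j$ on all but at most $4$ edges and has at most $4$ more $W$-vertices than $F_j$. Since $|F_0| = n - O(\ln n)$, after $s = O(\ln n)$ iterations we arrive at $F_s$ with $n-1$ edges, which is then a spanning tree of $G$ with maximum degree at most $k$.

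Two invariants must be maintained so that Lemma \ref{l:extend-reg} continues to apply. First, the number of $W$-vertices of $F_j$ is bounded by $t + 4j \le n/(7k) + O(\ln n)$. Because
\[
\frac{n}{6.8k} - \frac{n}{7k} = \Theta\!\left(\frac{n}{k}\right) \gg \ln n
\]
for $n$ large (with $k$ fixed), this quantity stays strictly below $n/(6.8k)$ throughout the process, so the hypothesis of Lemma \ref{l:extend-reg} on the number of degree-$k$ vertices holds at every step. Second, the condition $|F_j| < n-1$ is automatic before the process terminates.

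It remains to count the edges of $F_s$ that lie outside $F$. Writing $c_j := |F_j \setminus F|$, we observe that any edge of $F_{j+1}$ not in $F$ either survives from $F_j$ (contributing to $F_j \setminus F$) or is one of the at most $4$ newly introduced edges from the lemma. Hence $c_{j+1} \le c_j + 4$, and consequently $c_s \le 4s = O(\ln n)$. Thus all but $O(\ln n)$ edges of the spanning tree $F_s$ come from $F$, as required.

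I do not foresee any real obstacle: the entire argument is a bookkeeping exercise on top of Lemma \ref{l:extend-reg}. The only point deserving care is checking that the slack between $n/(7k)$ in the hypothesis and $n/(6.8k)$ in the lemma is wide enough to absorb the $O(\ln n)$ additional $W$-vertices accumulated over the course of the iteration, which is exactly the reason the two thresholds were chosen differently.
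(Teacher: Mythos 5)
Your proposal is correct and matches the paper's proof, which is exactly the iteration of Lemma \ref{l:extend-reg} that you describe (the paper treats it as immediate). Your bookkeeping of the at most $4$ new edges and at most $4$ new degree-$k$ vertices per step, and the observation that the slack between $n/(7k)$ and $n/(6.8k)$ absorbs the $O(\ln n)$ accumulated $W$-vertices, is precisely the intended justification.
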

	
\section{From out-degree one orientations to bounded degree spanning trees}\label{sec:model}

Let $G$ be a graph with no isolated vertices. An {\em out-degree one orientation} of $G$ is obtained by letting each vertex $v$ of $G$
choose precisely one of its neighbors, say $u$, and orient the edge $vu$ as $(v,u)$ (i.e from $v$ to $u$).
Observe that an out-degree one orientation may have cycles of length $2$.
Also note that an out-degree one orientation has the property that each component\footnote{A component of a directed graph is a component of its underlying undirected graph.} 
contains precisely one directed cycle and that all cycles in the underlying graph of an out-degree one orientation are directed cycles. Furthermore, observe that the edges of the component that are not on its unique directed cycle (if there are any) are oriented ``toward'' the cycle. In particular, given the cycle, the orientation of each non-cycle edge of the component is uniquely determined.
Let ${\cal H}(G)$ denote the set of all out-degree one orientations of $G$. Clearly, $|{\cal H}(G)|=d(G)$.

Most of our proofs use the probabilistic model of Alon \cite{alon-1990}: Each $v \in V(G)$ chooses independently and uniformly at random a neighbor $u$ and the edge $vu$ is oriented $(v,u)$. In this way we obtain a uniform probability distribution over the sample space ${\cal H}(G)$. We let $\vec{G}$ denote a randomly selected element of ${\cal H}(G)$ and let $\Gamma(v)$ denote the chosen out-neighbor of $v$.

We focus on certain parameterized subsets of ${\cal H}(G)$. Let ${\cal H}_{k,s}(G)$ be the subset of all elements of ${\cal H}(G)$ with maximum
in-degree at most $k-1$ and with at most $s$ vertices of in-degree $k-1$. If $s = n$ (i.e. we do not
restrict the number of vertices with in-degree $k-1$) then we simply denote the set by ${\cal H}_k(G)$.
Let ${\cal H}^*_\ell(G)$ be the subset of all elements of ${\cal H}(G)$ with at most
$\ell$ directed cycles (equivalently, at most $\ell$ components). Our proofs are mostly concerned with establishing lower bounds for the probability that 
$\vec{G} \in {\cal H}_{k,s}(G) \cap {\cal H}^*_\ell(G)$. Hence we denote
$$
P_{k,s,\ell}(G) = \Pr[\vec{G} \in {\cal H}_{k,s}(G) \cap {\cal H}^*_\ell(G)]\;.
$$

\begin{lemma}\label{l:completing}
	Let $k \ge 3$ be given. Suppose that $G$ is a connected graph with minimum degree at least $n/(k+1)$.
	Then:
	$$
	c_k(G)^{1/n} \ge (1-o_n(1))d(G)^{1/n} P_{k,n/(7k),\ln n}(G)^{1/n}\;.
	$$
\end{lemma}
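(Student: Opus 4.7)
The plan is to set up a many-to-one map $\phi$ from the good orientations in ${\cal H}_{k,n/(7k)}(G) \cap {\cal H}^*_{\ln n}(G)$, whose cardinality is exactly $d(G) \cdot P_{k,n/(7k),\ln n}(G)$, to spanning trees of $G$ with maximum degree at most $k$, and then to check that every fiber has size at most $n^{O(\ln n)}$, so that $n$-th roots produce only a multiplicative loss of $1+o_n(1)$.

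Given a good orientation $\vec G$, pass to its underlying undirected graph $H$. Because out-degrees are exactly $1$ and in-degrees are at most $k-1$, every vertex of $H$ has undirected degree at most $k$; moreover a vertex of in-degree at most $k-2$ has undirected degree strictly less than $k$, so the degree-$k$ vertices of $H$ are among the at most $n/(7k)$ vertices of $\vec G$ of in-degree exactly $k-1$. The components of $H$ coincide with those of $\vec G$, so there are at most $\ln n$ of them, and each contains at most one undirected cycle (a directed cycle of length $\ge 3$ survives as an undirected cycle, while a $2$-cycle collapses to a single edge inside a tree component). Deleting one canonically chosen edge from each undirected cycle yields a spanning forest $F$ of $G$ with at least $n-\ln n$ edges, maximum degree at most $k$, and at most $n/(7k)$ vertices of degree $k$. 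These are precisely the hypotheses of Corollary \ref{coro:extend-reg}, which supplies a spanning tree $T$ of $G$ of maximum degree at most $k$ sharing all but $O(\ln n)$ edges with $F$; set $\phi(\vec G):=T$.

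To bound $|\phi^{-1}(T)|$, I would unwind the construction stage by stage. The number of spanning forests $F$ of $G$ that differ from $T$ in $O(\ln n)$ edges on each side is at most $\binom{n-1}{O(\ln n)}\binom{n^2}{O(\ln n)} = n^{O(\ln n)}$. For each such $F$, the number of candidate underlying graphs $H$, obtained by adding at most $\ln n$ edges of $G$ back to $F$, is at most $\binom{n^2}{\ln n}=n^{O(\ln n)}$. Finally, for each candidate $H$, the orientation $\vec G$ is determined except for the choice of direction on each undirected cycle of length $\ge 3$, of which there are at most $\ln n$, giving at most $2^{\ln n}$ orientations. Multiplying, $|\phi^{-1}(T)| \le n^{O(\ln n)}$.

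Combining gives $c_k(G) \ge d(G) \cdot P_{k,n/(7k),\ln n}(G) / n^{O(\ln n)}$, and taking $n$-th roots with $n^{O(\ln n)/n} = \exp(O((\ln n)^2/n)) = 1+o_n(1)$ completes the argument. The substantive step is the map construction in the second paragraph, where one must verify that the forest read off from a good orientation really does satisfy the hypotheses of Corollary \ref{coro:extend-reg}; the fiber bound is then a routine, if slightly finicky, multi-stage accounting, and the main thing to watch is that each stage introduces only $O(\ln n)$ ambiguity, keeping the overall overcounting sub-exponential in $n$.
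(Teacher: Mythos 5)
Your proof is correct and takes essentially the same route as the paper's: delete one edge per cycle of a good orientation to obtain a forest satisfying the hypotheses of Corollary \ref{coro:extend-reg}, extend it to a bounded-degree spanning tree, and bound every fiber of the composite map by $n^{O(\ln n)}$ before taking $n$-th roots. The only slip is in the final accounting step: for a component whose directed cycle is a $2$-cycle the underlying graph is a tree, so recovering $\vec G$ from $H$ also requires choosing which edge is doubled (up to $n$ choices per component, not just a cycle direction), but since there are at most $\ln n$ components this only replaces your $2^{\ln n}$ by $n^{O(\ln n)}$ and affects nothing.
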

\begin{proof}
	Let $p=P_{k,n/(7k),\ln n}(G)$.
	By the definition of $p$, we have that
	$$
	|{\cal H}_{k,n/(7k)}(G) \cap {\cal H}^*_{\ln n}(G)| \ge d(G)p\;.
	$$
	Consider some $\vec{G} \in {\cal H}_{k,n/(7k)}(G) \cap {\cal H}^*_{\ln n}(G)$.
	As it has at most $\ln n$ directed cycles (and recall that these cycles are pairwise vertex-disjoint
	as each belongs to a distinct component), it has at most  $\ln n$ edges that, once removed
	from $\vec{G}$, turn it into a forest $F$ with at least $n-O(\ln n)$ edges.
	Viewed as an undirected graph, $F$ has maximum degree at most $k$ (since the in-degree of each vertex of $\vec{G}$ is at most $k-1$ and the out-degree of each vertex of $\vec{G}$ is precisely $1$).
	Thus, we have a mapping assigning each
	$\vec{G} \in {\cal H}_{k,n/(7k)}(G) \cap {\cal H}^*_{\ln n}(G)$ an undirected forest $F$.	
	While this mapping is not injective, the fact that $\vec{G}$ only has at most $\ln n$ components
	implies that each $F$ is the image of at most
	$n^{O(\ln n)}$ distinct $\vec{G}$. Indeed, given an undirected $F$, suppose it has $t \le \ln n$ components of sizes $s_1,\ldots,s_t$. To turn it into an element of ${\cal H}(G)$, we must first add a single edge to each component to give a cycle, and then choose the orientation of each cycle in each
	component, which implies the orientation of non-cycle edges. Hence, the number of possible
	$\vec{G}$ obtained from $F$ is at most $\prod_{i=1}^t (2s_i^2) \le n^{O(\ln n)}$.
	Furthermore, since $\vec{G}$ has at most $n/(7k)$ vertices with in-degree $k-1$, it follows that
	$F$ has at most $n/(7k)$ vertices with degree $k$. By Corollary \ref{coro:extend-reg},
	there exists a spanning tree $T$ of $G$ with maximum degree at most $k$ where all but at most
	$O(\ln n)$ of its edges are from $F$. Thus, we have a mapping assigning each
	$\vec{G} \in {\cal H}_{k,n/(7k)}(G) \cap {\cal H}^*_{\ln n}(G)$ a spanning tree $T$ of $G$ with maximum degree at most $k$. While this mapping is not injective, the fact that the edit distance
	between $T$ and $F$ is $O(\ln n)$ trivially implies that each $T$ is the image of at most
	$n^{O(\ln n)}$ distinct $F$. Hence, we obtain that
	$$
	c_k(G) \ge d(G)p n^{-O(\ln n)}\;.
	$$
	Taking the $n$'th root from both sides of the last inequality therefore concludes the lemma.
\end{proof}

We also require an upper bound for the probability that  $\vec{G}$ has many components.
The following lemma is proved by Kostochka \cite{kostochka-1995} (see Lemma 2 in that paper, applied to the case where the minimum degree is at least $n/(k+1)$, as we assume).
\begin{lemma}\label{l:compoents}\cite{kostochka-1995}
	Let $G$ be a graph with minimum degree at least $n/(k+1)$.
	The expected number of components of $\vec{G}$ is at most $(k+1)\ln n$. \qed
\end{lemma}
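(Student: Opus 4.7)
The plan is to write the expected number of components as a double sum indexed by vertices and cycle lengths, and then to dominate each term by a return probability of the simple random walk on $G$. Since each component of $\vec G$ contains exactly one directed cycle and a directed cycle of length $j$ contains exactly $j$ vertices, if $p_j(v)$ denotes the probability that $v$ lies on a directed cycle of length $j$ in $\vec G$, then
$$
E[\text{number of components of }\vec G] \;=\; \sum_{j\ge 2}\frac{1}{j}\sum_{v\in V(G)} p_j(v),
$$
with lower limit $j\ge 2$ because $G$ is loopless.

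The key observation is that the sequence $v,\Gamma(v),\Gamma(\Gamma(v)),\ldots$ is precisely the simple random walk on $G$ started at $v$, because $\Gamma(u)$ is uniform on $N(u)$. Writing $\Gamma^{(j)}$ for the $j$-fold iteration of $\Gamma$, we therefore have $p_j(v)\le \Pr[\Gamma^{(j)}(v)=v]=(P^j)_{vv}$, where $P$ is the transition matrix of the simple random walk with $P_{uw}=\mathbf{1}[uw\in E(G)]/d(u)$. The heart of the argument is the uniform bound $(P^j)_{vv}\le (k+1)/n$ for every $j\ge 1$. To prove it, I would write $(P^j)_{vv}=\sum_u (P^{j-1})_{vu}P_{uv}$, bound $P_{uv}\le 1/d(u)\le (k+1)/n$ using the minimum degree hypothesis $d(u)\ge n/(k+1)$, and use the fact that the $v$-row of $P^{j-1}$ sums to $1$ to conclude $(P^j)_{vv}\le (k+1)/n$.

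Combining these estimates with the elementary bound $\sum_{j=2}^n 1/j\le \ln n$ then gives
$$
E[\text{number of components of }\vec G] \;\le\; \sum_{v\in V(G)} \sum_{j=2}^{n} \frac{k+1}{n\,j} \;\le\; (k+1)\ln n,
$$
which is the desired conclusion. The main subtlety, and it is only a mild one, is recognizing that $p_j(v)$ is controlled by a random-walk return probability and that applying the minimum-degree assumption merely to the final step of the walk is already enough to give the uniform bound $(k+1)/n$ independently of $j$; no spectral information about $P$ nor any structural information about $G$ beyond its minimum degree is required.
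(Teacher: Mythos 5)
Your overall plan is sound and is essentially the standard argument (the paper itself gives no proof of this lemma, citing Lemma~2 of Kostochka \cite{kostochka-1995}): write the expected number of components as $\sum_{j\ge 2}\frac1j\sum_v p_j(v)$, bound each $p_j(v)$ by $(k+1)/n$ using the minimum degree only for the step that closes the cycle, and finish with $\sum_{j=2}^n 1/j\le\ln n$. However, one step as written is false. The sequence $v,\Gamma(v),\Gamma(\Gamma(v)),\ldots$ is \emph{not} a simple random walk on $G$: each vertex samples its out-neighbor once and reuses that same choice on every revisit, so after the trajectory first returns to an already-visited vertex it is deterministic. Consequently $\Pr[\Gamma^{(j)}(v)=v]$ need not equal $(P^j)_{vv}$, and it can even exceed it: in $K_3$ the event $\Gamma^{(4)}(v)=v$ occurs exactly when $v$ lies on a directed $2$-cycle, which has probability $\frac12$, whereas $(P^4)_{vv}=\frac38$. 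So the chain $p_j(v)\le\Pr[\Gamma^{(j)}(v)=v]=(P^j)_{vv}$ is broken at the equality, and even the inequality $\Pr[\Gamma^{(j)}(v)=v]\le (P^j)_{vv}$ is false in general.

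The repair is short, because the inequality you actually need, $p_j(v)\le (P^j)_{vv}$, is true for a different reason: the event that $v$ lies on a directed cycle of length $j$ is the disjoint union, over sequences of \emph{distinct} vertices $v=v_0,v_1,\ldots,v_{j-1}$ with $v_iv_{i+1}\in E(G)$ (indices mod $j$), of the events $\{\Gamma(v_i)=v_{i+1} \text{ for all } i\}$; since the $v_i$ are distinct, these $j$ choices are independent, so each such cycle has probability $\prod_i 1/d(v_i)$, which is precisely one nonnegative term in the closed-walk expansion of $(P^j)_{vv}$. Even simpler, you can bypass $P$ entirely: for each such sequence bound the closing choice by $\Pr[\Gamma(v_{j-1})=v]\le 1/d(v_{j-1})\le (k+1)/n$, and observe that the sum over all such sequences of the product of the remaining $j-1$ probabilities is at most $1$ (it is the probability that the first $j-1$ steps of the trajectory from $v$ form a self-avoiding path). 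This gives $p_j(v)\le (k+1)/n$ directly, and the rest of your computation goes through unchanged.
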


For $\vec{G} \in {\cal H}(G)$, let $B^{\vec{G}}_i$ denote the set of vertices with in-degree $i$.
We will omit the superscript and simply write $B_i$ whenever  $\vec{G}$ is clear from context.

We define the following {\em $K$-stage model} for establishing a random element of ${\cal H}(G)$.
This model is associated with a positive integer $K$ and a convex sum of probabilities $p_1+\cdots+p_K=1$.
In the first part of the $K$-stage model, we select uniformly and independently (with replacement) $K$ elements of ${\cal H}(G)$ as in the aforementioned model of Alon. Denote the selected elements by $\vec{G}_c$ for $c=1,\ldots,K$.
Let $\Gamma_c(v)$ denote the out-neighbor of $v$ in $\vec{G}_c$.
In the second part of the $K$-stage model, we let each vertex $v \in V(G)$ choose precisely one of
$\Gamma_1(v),\ldots,\Gamma_K(v)$ where $\Gamma_c(v)$ is chosen with probability $p_c$.
Observe that the resulting final element $\vec{G}$ consisting of all $n=|V(G)|$ selected edges is also a uniform random element of ${\cal H}(G)$. Also note that for any given 
partition of $V(G)$ into parts $V_1,\ldots,V_K$, the probability that all out-edges of the vertices of $V_c$ are taken from $\vec{G}_c$ for all $c=1,\ldots,K$ is precisely $\prod p_c^{|V_c|}$.

As mentioned in the introduction, most of our proofs for lower-bounding $c_k(G)$ contain two major ingredients.
The first ingredient consists of using the $K$-stage model for a suitable $K$ in order to
establish a lower bound for $P_{k,s,\ell}(G)$ (with $\ell = \ln n$). This first ingredient further splits into several steps:\\
a) The {\em nibble step} where we prove that with nonnegligible probability, there is a forest with a linear number of edges consisting of edges of $\vec{G}_1,\ldots,\vec{G}_{K-1}$ and which satisfies certain desirable properties.\\
b)  The {\em completion step} where we prove that given a forest with the properties of the nibble step we can, with nonnegligible probability, complete it into an out-degree one orientation with certain desirable properties using only the edges
of $\vec{G}_{K}$.\\
c) A {\em combination lemma} which uses (a) and (b) above to prove a lower bound for $P_{k,s,\ell}(G)$.\\
The second ingredient uses Lemma \ref{l:completing} applied to
the lower bound obtained in (c) to yield the final outcome of the desired proof.
Table \ref{table:2} gives a roadmap for the various lemmas used for establishing steps steps (a) (b) (c), and the value of $K$ used.
\begin{table}
	\vspace{1cm}
	\centering
	\setlength\extrarowheight{4pt}
	\begin{tabular}{l|c|c|c|c}
		\hline
		Theorem or case thereof & $K$ & nibble step & completion step & combination \\
		\hline
		\ref{t:regular}, $k \ge 5$ & $2$ & Lemma \ref{l:red-forest} & Lemma \ref{l:successful-not3} & Lemma \ref{l:regular}  \\
		\hline
		\ref{t:regular}, $k=4$ & $5$ & Lemma \ref{l:a-f} & Lemma \ref{l:successful-3} & Lemma \ref{l:regular-3}  \\
		\hline
		\ref{t:regular}, $k=3$ & $20$ & Lemma \ref{l:a-f} & Lemma \ref{l:successful-3} & Lemma \ref{l:regular-3}  \\
		\hline
		\ref{t:nearly-regular}  & $2$ & Lemma \ref{l:red-forest-2} & Lemma \ref{l:successful-2} & Lemma \ref{l:nearly-regular}  \\
		\hline
	\end{tabular}
	\caption{A roadmap for the proofs of Theorems \ref{t:regular}, \ref{t:nearly-regular}, \ref{t:non-regular}.}
	\label{table:2}
\end{table}

\section{Proof of Theorem \ref{t:regular}}\label{sec:regular}

In this section we assume that $G$ is $r$-regular with $r \ge n/(k+1)$.
We will consistently be referring to the notation of Section \ref{sec:model}.
When $k \ge 5$ we will use the two-stage model ($K=2$) and when $k \in \{3,4\}$ (dealt with in the next subsection)
we will need to use larger $K$ (see Table \ref{table:2}).

\subsection{The case $k \ge 5$}
We first need to establish several lemmas (the first lemma being straightforward).
\begin{lemma}\label{l:prob-indeg}
	Let $G$ be an $r$-regular graph with $r \ge n/(k+1)$. For $0 \le i \le n$, the probability that
	$v \in B_i$ (i.e., that $v$ has in-degree $i$ in $\vec{G}$) is
	$$
	\Pr[v \in B_i ] =  \binom{r}{i}\frac{1}{r^i}\left(1-\frac{1}{r}\right)^{r-i} \le (1+o_n(1))\frac{1}{i!e}\;.
	$$
	Furthermore, the in-degree of $v$ in  $\vec{G}$ is nearly Poisson as for all $0 \le i \le k$,
	\[
	\pushQED{\qed} 
	\Pr[v \in B_i] = \left(1 \pm O(n^{-1})\right)\frac{1}{i!e}\;. \qedhere 
	\popQED
	\]
\end{lemma}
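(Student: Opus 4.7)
The plan is to observe that this is a direct binomial calculation followed by routine asymptotics, as signaled by the paper's remark that the lemma is ``straightforward.''

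First I would unpack the random model. Fix $v \in V(G)$. For each of the $r$ neighbors $u$ of $v$, the event ``$u$ chose $v$ as its out-neighbor'' has probability exactly $1/r$, since $u$ has degree $r$ and picks uniformly among its $r$ neighbors; moreover these events are mutually independent because the choices are made independently at distinct vertices. Therefore the in-degree of $v$ in $\vec{G}$ has distribution $\mathrm{Bin}(r,1/r)$, and
\[
\Pr[v \in B_i] = \binom{r}{i}\frac{1}{r^i}\left(1-\frac{1}{r}\right)^{r-i}.
\]
This establishes the equality in the statement.

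Next I would prove the uniform upper bound. Rewriting $\binom{r}{i}/r^i = \frac{1}{i!}\prod_{j=0}^{i-1}(1-j/r)$, the inequality $1-x \le e^{-x}$ gives $\prod_{j=0}^{i-1}(1-j/r) \le \exp(-i(i-1)/(2r))$. Likewise, $\ln(1-1/r) \le -1/r$ yields $(1-1/r)^{r-i} \le e^{-(r-i)/r} = e^{-1}e^{i/r}$. Multiplying,
\[
\Pr[v \in B_i] \le \frac{1}{i!\,e}\exp\!\left(\frac{i(3-i)}{2r}\right).
\]
The exponent is at most $1/r$ for every integer $i \ge 0$ (it is $0$ at $i=0,3$, equals $1/r$ at $i=1,2$, and is nonpositive for $i \ge 3$), so $\Pr[v \in B_i] \le (1+O(1/r))/(i!\,e)$ uniformly in $i \le r$; for $i > r$ the probability is $0$, so the bound holds trivially for all $0 \le i \le n$. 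Since $r \ge n/(k+1)$ and $k$ is fixed, $O(1/r) = o_n(1)$, which gives the claimed inequality.

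Finally, for the ``furthermore'' part with $0 \le i \le k$, I would produce matching two-sided estimates. The product $\prod_{j=0}^{i-1}(1-j/r) = 1 - O(i^2/r) = 1 \pm O(1/n)$ since $i \le k$ is bounded and $r = \Theta(n)$. Taylor-expanding $\ln(1-1/r) = -1/r - 1/(2r^2) + O(1/r^3)$ gives
\[
(1-1/r)^{r-i} = \exp\!\bigl(-1 + i/r + O(1/r)\bigr) = \frac{1}{e}\bigl(1 \pm O(1/n)\bigr)
\]
for bounded $i$. Multiplying these two factors yields $\Pr[v \in B_i] = (1 \pm O(n^{-1}))/(i!\,e)$, as required. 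There is no genuine obstacle here; the only thing to get right is to check that the exponent $i(3-i)/(2r)$ in the combined upper bound is indeed $\le O(1/r)$ for every integer $i \ge 0$, so that no additional constant factor sneaks into the $(1+o_n(1))$ estimate.
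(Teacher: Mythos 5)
Your proof is correct and follows exactly the straightforward argument the paper intends but omits (the lemma is stated with its \qed, the in-degree of $v$ being $\mathrm{Bin}(r,1/r)$ by regularity and independence of the choices). The elementary estimates $\binom{r}{i}r^{-i}=\frac{1}{i!}\prod_{j=0}^{i-1}(1-j/r)$ and $(1-1/r)^{r-i}\le e^{-1}e^{i/r}$, together with the check that the combined exponent $i(3-i)/(2r)\le 1/r$, correctly yield both the uniform upper bound and the two-sided $\left(1\pm O(n^{-1})\right)\frac{1}{i!e}$ estimate for bounded $i$.
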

\noindent
\begin{lemma}\label{l:neighbors-ok}
	Let $G$ be an $r$-regular graph with $r \ge n/(k+1)$. For all $0 \le i \le k$ and for any set $X$ of vertices of $G$ it holds that
	$$
	\Pr \left[\, \left| |X \cap B_i| - \frac{|X|}{i!e} \right| > n^{2/3}\right] < \frac{1}{n^2}\;.
	$$
\end{lemma}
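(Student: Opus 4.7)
The plan is to view $|X \cap B_i|$ as a function of the $n$ independent random choices $\Gamma(v_1),\dots,\Gamma(v_n)$ that determine $\vec{G}$, and apply a bounded-differences concentration inequality. First, by Lemma \ref{l:prob-indeg} and linearity of expectation,
$$
E\bigl[|X \cap B_i|\bigr] = \sum_{v \in X} \Pr[v \in B_i] = \frac{|X|}{i!e}\bigl(1 \pm O(n^{-1})\bigr),
$$
so the gap $\bigl|E[|X \cap B_i|] - |X|/(i!e)\bigr|$ is only $O(1)$ (since $|X| \le n$), which is absorbed by the $n^{2/3}$ slack in the statement. It therefore suffices to show that $|X \cap B_i|$ is within $n^{2/3} - O(1)$ of its expectation with probability at least $1 - n^{-2}$.

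The key observation is a Lipschitz property for the function $f(\Gamma(v_1),\dots,\Gamma(v_n)) = |X \cap B_i|$. If we alter a single coordinate $\Gamma(w)$ from some out-neighbor $u$ to a different out-neighbor $u'$, then the in-degrees in $\vec{G}$ change only at $u$ (decreases by $1$) and at $u'$ (increases by $1$); every other vertex has identical in-degree. Consequently, at most two vertices can flip their $B_i$-membership, so each coordinate alters $|X \cap B_i|$ by at most $2$.

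McDiarmid's inequality then gives
$$
\Pr\!\left[\bigl||X \cap B_i| - E[|X \cap B_i|]\bigr| > t\right] \le 2\exp\!\left(-\frac{t^2}{2 n \cdot 2^{2}}\right) = 2\exp\!\left(-\frac{t^2}{8n}\right).
$$
Setting $t = \tfrac{1}{2}n^{2/3}$ yields a bound of $2\exp(-n^{1/3}/32)$, which is far below $n^{-2}$ for large $n$. Combining this concentration around $E[|X \cap B_i|]$ with the $O(1)$ approximation of that expectation by $|X|/(i!e)$ completes the argument.

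There is no real obstacle here; the only point requiring care is justifying the $2$-Lipschitz bound, which is immediate from the observation that whether $v \in B_i$ depends solely on how many $u$ satisfy $\Gamma(u) = v$, so a single re-pointing of $\Gamma(w)$ can affect only the two vertices that lose or gain that in-edge.
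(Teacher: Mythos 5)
Your proof is correct and follows essentially the same route as the paper: the paper exposes the $n$ independent out-neighbor choices one at a time to get a Doob martingale and applies Azuma's inequality with an $O(1)$ Lipschitz bound, which is exactly the bounded-differences (McDiarmid) argument you give, combined with the same use of Lemma \ref{l:prob-indeg} to approximate the expectation within $O(1)$. Your explicit justification of the Lipschitz constant $2$ (re-pointing one out-edge changes the in-degree of only the two affected vertices) is if anything slightly more careful than the paper's phrasing, and the constant-factor differences in the exponential bound are immaterial.
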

\begin{proof}
	Consider the random variable $|X \cap B_i|$. By Lemma \ref{l:prob-indeg}, its expectation, denoted by
	$X_0$, is $X_0 = (1 \pm O(\frac{1}{n}))\frac{|X|}{i!e}=\frac{|X|}{i!e} \pm O_n(1)$. Now, suppose we expose the edges of $\vec{G}$ one by one
	in $n$ steps (in each step we choose the out-neighbor of another vertex of $G$), and let $X_j$ be the expectation of $|X \cap B_i|$ after $j$ steps have been exposed (so after the final stage we have $X_n=|X \cap B_i|$). Then $X_0,X_1,\ldots,X_n$ is a martingale satisfying the Lipschitz condition (each exposure increases by one the in-degree of a single vertex), so by Azuma's inequality
	(see \cite{AS-2004}), for all $\lambda > 0$,
	$$
	\Pr \left[\, \left| |X \cap B_i| - X_0 \right| > \lambda \sqrt{n} \right] < 2e^{-\lambda^2/2}\;.
	$$
	Using, say, $\lambda = n^{1/10}$ the lemma immediately follows.
\end{proof}

\begin{lemma}\label{l:removal}
	Let $G$ be an $r$-regular graph with $r \ge n/(k+1)$.
	For all $3 \le t \le k$ the following holds: With probability at least $\frac{1}{10}$, $\vec{G}$ has a set of at most
	$\frac{5.9n}{3e t!}$ edges, such that after their removal, the remaining subgraph has maximum in-degree at most
	$t-1$.
\end{lemma}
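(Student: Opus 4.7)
The plan is to let $X$ denote the minimum number of edges whose removal from $\vec G$ reduces every in-degree to at most $t-1$, bound $\mathbb{E}[X]$ via Lemma \ref{l:prob-indeg}, and conclude using Markov's inequality. Since each edge of $\vec G$ has a unique head, the deletions required at distinct vertices involve disjoint edge sets, and so
$$X \;=\; \sum_{i=t}^{r}(i-t+1)\,|B_i|.$$
By linearity of expectation and the uniform upper bound $\Pr[v\in B_i]\le(1+o_n(1))/(i!e)$ in Lemma \ref{l:prob-indeg} (valid throughout $0\le i\le r$),
$$\mathbb{E}[X]\;\le\;(1+o_n(1))\,\frac{n}{e}\sum_{i\ge t}\frac{i-t+1}{i!}\;=\;(1+o_n(1))\,\frac{n\,\alpha_t}{e\,t!},\quad\text{where}\quad\alpha_t:=t!\sum_{i\ge t}\frac{i-t+1}{i!}.$$

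Writing $\alpha_t=1+\sum_{j\ge 1}\frac{j+1}{(t+1)(t+2)\cdots(t+j)}$ shows that $\alpha_t$ is strictly decreasing in $t$. A short telescoping calculation using $(i-t+1)/i!=1/(i-1)!-(t-1)/i!$ yields the closed form $\alpha_3=18-6e<1.7$, so $\alpha_t\le\alpha_3<1.7$ for every $t\ge 3$. Applying Markov's inequality with threshold $5.9n/(3et!)$,
$$\Pr\!\left[X>\frac{5.9n}{3e\,t!}\right]\;\le\;(1+o_n(1))\,\frac{3\alpha_t}{5.9}\;<\;\frac{3\cdot 1.7}{5.9}+o_n(1)\;<\;0.87+o_n(1).$$
Thus the complementary event has probability at least $0.13-o_n(1)\ge 1/10$ for $n$ sufficiently large, and on this event any minimum deletion set of cardinality $X$ witnesses the claim of the lemma.

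The only delicate point is verifying that the slack factor $5.9/3\approx 1.97$ in the statement comfortably dominates the worst-case constant $\alpha_3\approx 1.69$, which is precisely what allows Markov's inequality to yield a constant lower bound of $1/10$ rather than a vanishing one. This numerical margin also explains the restriction $t\ge 3$: one computes $\alpha_2=2>1.97$, so the same argument would fail at $t=2$. A secondary technical point is that the bound from Lemma \ref{l:prob-indeg} must be applied uniformly across the entire range $i\in[t,r]$; this is fine because the exponent arising from $\binom{r}{i}r^{-i}(1-1/r)^{r-i}\cdot i!\,e$ is $O(1/r)=o_n(1)$ uniformly in $i$, and in any case the contribution of indices $i\ge\log^2 n$ is negligible since $(i-t+1)/i!$ decays super-exponentially.
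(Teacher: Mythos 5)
Your proof is correct and follows essentially the same route as the paper: you express the minimum deletion count as $\sum_{i\ge t}(i-t+1)|B_i|$, bound its expectation via the near-Poisson in-degree bound of Lemma \ref{l:prob-indeg}, and finish with Markov's inequality at the threshold $5.9n/(3et!)$. The only difference is bookkeeping of the constant: you evaluate the tail sum exactly ($\alpha_3=18-6e<1.7$, decreasing in $t$), whereas the paper uses a ratio-of-$2.5$ geometric bound for $t\ge 4$ and a separate check at $t=3$; your version is, if anything, slightly tighter.
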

\begin{proof}
Let 
$$
Q_{\vec{G},t} = \sum_{i=t}^n (i-t+1)|B_i|
$$
be the smallest number of edges we may delete from $\vec{G}$ in order to obtain a subgraph where all vertices have in-degree at most $t-1$. We upper-bound the expected value of $Q_{\vec{G},t}$. By Lemma \ref{l:prob-indeg} we have that
$$
\mathbb{E}[Q_{\vec{G},t}] = \sum_{i=t}^n (i-t+1)\mathbb{E}[|B_i|] \le (1+o_n(1))\frac{n}{e}\left(\sum_{i=t}^n (i-t+1)\frac{1}{i!}\right)\;.
$$
Now, for all $t \ge 4$, each term in the sum $\sum_{i=t}^n (i-t+1)\frac{1}{i!}$ is smaller than its predecessor
by at least a factor of $2.5$, which means that for all $n$ sufficiently large
$$
\mathbb{E}[Q_{\vec{G},t}] \le \frac{5.3n}{3e t!}\;.
$$
It is easily verified that for $t=3$, the last inequality also holds since
$\sum_{i=3}^\infty \frac{i-2}{i!} < 5.3/18$.
By Markov's inequality, we therefore have that with probability at least $\frac{1}{10}$, for $t \ge 3$ it holds that
$$
Q_{\vec{G},t} \le \frac{5.9n}{3e t!}\;. 
$$
Thus, with probability at least $\frac{1}{10}$, we can pick a set of at most
$\frac{5.9n}{3e t!}$ edges of $\vec{G}$, such that after their removal, the remaining subgraph has maximum in-degree at most $t-1$. 
\end{proof}

\begin{lemma}\label{l:red-forest}
	Let $G$ be an $r$-regular graph with $r \ge n/(k+1)$.
	With probability at least $\frac{1}{20}$, $\vec{G}$ has a spanning forest $F$ such that:\\
	(a) $F$ has maximum in-degree at most $k-2$.\\
	(b) $F$ has at least $n-\frac{2n}{e (k-1)!}$ edges.\\
	(c) The number of vertices of $F$ with in-degree at most $k-3$ is at least $(1-o_n(1))\frac{n}{e}\sum_{i=0}^{k-3}\frac{1}{i!}$.
\end{lemma}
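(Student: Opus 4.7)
My plan is to construct $F$ from $\vec{G}$ by removing two batches of edges: first a small set that brings every in-degree down to $k-2$, and then one additional edge from each directed cycle that survives, which turns what remains into a forest. Success will be controlled by intersecting three high-probability events.

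First I apply Lemma \ref{l:removal} with $t=k-1$ (valid since $k \ge 5$): with probability at least $1/10$, $\vec{G}$ admits an edge set $S$ of size at most $\frac{5.9n}{3e(k-1)!}$ whose removal leaves maximum in-degree at most $k-2$. Next, by Lemma \ref{l:compoents} together with Markov's inequality, the probability that $\vec{G}$ has more than $50(k+1)\ln n$ components, equivalently more than that many directed cycles, is at most $1/50$. Finally, applying Lemma \ref{l:neighbors-ok} with $X = V(G)$ for each $i \in \{0,1,\ldots,k-3\}$ and a union bound, the concentration event
$$\left| |B_i| - \frac{n}{i!e} \right| \le n^{2/3} \quad \text{for all } 0 \le i \le k-3$$
holds with probability $1 - O(k/n^2) = 1 - o_n(1)$. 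A union bound then gives that all three events hold simultaneously with probability at least $1/10 - 1/50 - o_n(1) > 1/20$ for $n$ large enough.

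Conditional on these events, I define $F$ to be $\vec{G}$ with $S$ removed and one arbitrary edge deleted from each surviving directed cycle. Since every cycle of $\vec{G}$ lies in a distinct component, at most $50(k+1)\ln n = O(\ln n)$ edges are deleted in this second step. The total number of removed edges is therefore at most $\frac{5.9n}{3e(k-1)!} + O(\ln n)$, which for $n$ sufficiently large is below $\frac{2n}{e(k-1)!}$ (the slack is of order $\frac{n}{e(k-1)!}$, which dwarfs $\ln n$), yielding (b). Condition (a) is immediate by construction, since removing edges only decreases in-degrees and the first batch already achieves the cap $k-2$. For (c), the same monotonicity implies that every vertex in $B_0 \cup \cdots \cup B_{k-3}$ retains in-degree at most $k-3$ in $F$, so by the concentration event the count of such vertices in $F$ is at least $(1-o_n(1))\frac{n}{e}\sum_{i=0}^{k-3}\frac{1}{i!}$.

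The main obstacle is the tight probability budget: Lemma \ref{l:removal} supplies only the constant $1/10$, leaving very little room for the other events. This is why I invoke Markov on the component count with substantial slack and exploit the polynomial concentration of Lemma \ref{l:neighbors-ok}, so that events on components and on in-degree counts together consume well under $1/20$ of the probability. Everything else is bookkeeping.
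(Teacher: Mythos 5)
Your proposal is correct and follows essentially the same route as the paper: Lemma \ref{l:removal} with $t=k-1$, Markov's inequality on the component count via Lemma \ref{l:compoents}, concentration of the $|B_i|$ via Lemma \ref{l:neighbors-ok}, and a union bound, with only cosmetic differences in the constants (e.g.\ $50(k+1)\ln n$ and $1/50$ versus the paper's $40(k+1)\ln n$ and $1/40$). The edge-count and monotonicity bookkeeping for (a)--(c) matches the paper's argument.
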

\begin{proof}
By Lemma \ref{l:removal}, with probability at least $\frac{1}{10}$ we can remove at most
	$\frac{5.9n}{3e (k-1)!}$ edges from $\vec{G}$, such that after their removal, the remaining subgraph has maximum in-degree at most $k-2$.
	
	By Lemma \ref{l:compoents}, with probability at most  $\frac{1}{40}$ we have that $\vec{G}$ has more than $40(k+1)\ln n$ components. Recalling that in $\vec{G}$ each component can be made
	a tree by removing a single edge from its unique directed cycle, with probability at least  $1-\frac{1}{40}$ we have that $\vec{G}$ can be made acyclic by removing at most  $40(k+1)\ln n$
	edges.
	
	By Lemma \ref{l:neighbors-ok} applied to $X=V(G)$,  with probability at least
	$1-(k-1)/n^2 > 1-1/40$
	we have that for all $0 \le i \le k-3$, the number of vertices of $\vec{G}$ with in-degree
	$i$ is at least $n/(i!e)-n^{2/3} \ge (1-o_n(1))n/(i!e)$. Thus, with probability at least
	$1-1/40$ there are at least $(1-o_n(1))\frac{n}{e}\sum_{i=0}^{k-3}\frac{1}{i!}$ vertices of $\vec{G}$
	with in-degree at most $k-3$.
	
	We therefore obtain that with probability at least
	$\frac{1}{10}-\frac{1}{40}-\frac{1}{40}=\frac{1}{20}$, the claimed forest exists and has at least $n-\frac{5.9n}{3e (k-1)!}-40(k+1)\ln n \ge
	n-\frac{2n}{e (k-1)!}$ edges.
\end{proof}

Using the two-stage model, consider $\vec{G}_1$ and $\vec{G}_2$ as denoted in Section \ref{sec:model}. We say that $\vec{G}_1$ is {\em successful}
if it has a spanning forest as guaranteed by Lemma \ref{l:red-forest}. By that lemma, with probability at least $\frac{1}{20}$, we have that $\vec{G}_1$ is successful.
Assuming $\vec{G}_1$ is successful, designate a spanning forest $F_1$
of it satisfying the properties of Lemma \ref{l:red-forest}. Let $X_1 \subset V(G)$ be the set of vertices with
out-degree $0$ in $F_1$. Thus, we have by Lemma \ref{l:red-forest} that $|X_1| \le \frac{2n}{e(k-1)!}=ng_k$.

Now, consider the set of edges of $\vec{G}_2$ emanating from $X_1$, denoting them by
$E_2 = \{(v,\Gamma_2(v))\,|\, v \in X_1\}$. By adding $E_2$ to $F_1$ we therefore obtain an out-degree one orientation of $G$, which we denote (slightly abusing notation) by $E_2 \cup F_1$.
\begin{lemma}\label{l:successful-not3}
	Suppose that $k \ge 5$. Given that $\vec{G}_1$ is successful, and given the corresponding forest $F_1$,
	the probability that $(E_2 \cup F_1) \in {\cal H}_{k-1}(G) \cap {\cal H}^*_{\ln n}(G)$ is at least
$$
(1 - (k+1)(f_k+g_k)-o_n(1))^{n g_k}\;.
$$
\end{lemma}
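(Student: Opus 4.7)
The bound has the form of a product of at most $n g_k$ per-vertex factors of size $1-(k+1)(f_k+g_k)$, pointing directly to a sequential exposure argument over $X_1 = \{v_1, \ldots, v_{|X_1|}\}$ where each vertex contributes one factor.

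Let $B \subseteq V(G)$ be the set of vertices whose $F_1$-in-degree equals $k-2$. Since $f_k = 1 - e^{-1}\sum_{i=0}^{k-3} 1/i!$, Lemma~\ref{l:red-forest}(c) gives $|B| \le n f_k + o(n)$. Fix any ordering of $X_1$ and, for each $i$, set
$$
\mathcal{E}_i : \Gamma_2(v_i) \notin B \text{ and } \Gamma_2(v_i) \ne \Gamma_2(v_j) \text{ for all } j < i.
$$
On $\mathcal{E}_1 \cap \cdots \cap \mathcal{E}_{i-1}$ the set of forbidden targets for $\Gamma_2(v_i)$ has size at most $|B| + (i-1) \le n(f_k + g_k) + o(n)$, while $\Gamma_2(v_i)$ is uniform on the $r \ge n/(k+1)$ $G$-neighbours of $v_i$. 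Thus
$$
\Pr[\mathcal{E}_i \mid \mathcal{E}_1, \ldots, \mathcal{E}_{i-1}] \ge 1 - \frac{n(f_k+g_k)+o(n)}{r} \ge 1 - (k+1)(f_k+g_k) - o_n(1).
$$
Multiplying over $i \le |X_1|$ and using $|X_1| \le n g_k$ yields $\Pr[\bigcap_i \mathcal{E}_i] \ge (1-(k+1)(f_k+g_k)-o_n(1))^{n g_k}$. On this event every $E_2$-target is a distinct vertex whose $F_1$-in-degree was at most $k-3$, so $E_2 \cup F_1$ has in-degree at most $k-2$ everywhere, i.e.\ $E_2 \cup F_1 \in \mathcal{H}_{k-1}(G)$.

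For the cycle constraint $\mathcal{H}^*_{\ln n}(G)$, the components of $E_2 \cup F_1$ are in bijection with the cycles of the random map $\rho\colon X_1 \to X_1$ sending $v$ to the root of the $F_1$-tree containing $\Gamma_2(v)$, with per-pair weights $\Pr[\rho(v) = w] = |N_G(v) \cap T_w|/r \le (k+1)|T_w|/n$. A first-moment estimate along the lines of Lemma~\ref{l:compoents}, combined with an edge-exposure Azuma bound, should show that the number of cycles of $\rho$ exceeds $\ln n$ with probability at most $(1-\Omega(1))^{n g_k}$. The principal obstacle is precisely this last step: because the in-degree success probability is already exponentially small in $n$, the cycle-count control cannot be tacked on by a naive union bound, and must instead be executed conditionally on $\bigcap_i \mathcal{E}_i$. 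Since that conditioning only forbids vertices of $B$ and repeats, it merely spreads $\rho$ out further, so its conditional failure probability remains at the correct exponential scale and can be absorbed into the $o_n(1)$ appearing in the base of the claimed product.
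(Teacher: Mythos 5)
Your in-degree argument is fine and matches the paper's sequential exposure: forbidding the at most $nf_k+o(n)$ vertices of $F_1$-in-degree $k-2$ together with previously used targets, each step succeeds with probability at least $1-(k+1)(f_k+g_k)-o_n(1)$, and the product over $|X_1|\le ng_k$ steps gives membership in ${\cal H}_{k-1}(G)$ with the claimed probability. The genuine gap is the membership in ${\cal H}^*_{\ln n}(G)$, which you yourself flag as the ``principal obstacle'' and then only assert ``should'' work. The sketch you give does not: the first-moment bound in the spirit of Lemma \ref{l:compoents} yields an expected number of cycles of order $(k+1)\ln n>\ln n$, so Markov cannot even give constant probability of having at most $\ln n$ cycles; and an edge-exposure Azuma bound for the cycle count has Lipschitz constant $1$ over $|X_1|=\Theta(n)$ exposures, giving fluctuations of order $\sqrt{n}$, which is far too coarse to pin the count below $\ln n$. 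Nor is it clear how to carry out this control conditionally on $\bigcap_i\mathcal{E}_i$ at cost $(1-o(1))^{n}$, which is what absorbing it into the base of the product would require; this is precisely the part left unproven.

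The paper avoids this issue entirely by building the cycle control into the same per-vertex step, at $o_n(1)$ cost: it maintains the invariant that every component of $E_{2,i}\cup F_1$ with fewer than $n/\ln n$ vertices is a tree, by additionally deleting from the allowed set $S$ the (at most $n/\ln n$) vertices of $v_{i+1}$'s own component whenever that component is small. Then cycles can only ever be created inside components of size at least $n/\ln n$, of which there are at most $\ln n$, so the final orientation is deterministically in ${\cal H}^*_{\ln n}(G)$ on the good event, and the extra $n/\ln n$ forbidden vertices are swallowed by the $o_n(1)$ term in the per-step probability. To complete your proof you should add this (or an equivalent) deterministic component-size constraint to your events $\mathcal{E}_i$ rather than trying to control the number of cycles of the induced map $\rho$ by a separate probabilistic estimate.
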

\begin{proof}
	Fix an arbitrary ordering of the vertices of $X_1$, say $v_1,\ldots,v_{|X_1|}$. We consider the edges $(v_i,\Gamma_2(v_i))$ one by one, and let $E_{2,i} \cup F_1$ be the graph obtained after adding to
	$F_1$ the edges $(v_j,\Gamma_2(v_j))$ for $1 \le j \le i$. Also let $E_{2,0} = \emptyset$.
	We say that $E_{2,i} \cup F_1$ is {\em good} if it satisfies the following two properties:\\
	(i) The in-degree of each vertex in $E_{2,i} \cup F_1$ is at most $k-2$.\\
	(ii) Every component of $E_{2,i} \cup F_1$ with fewer than $n/\ln n$ vertices is a tree.
	
	Trivially, $E_{2,0} \cup F_1=F_1$ is good, since $F_1$ is a forest where the in-degree of each vertex is
	at most $k-2$. We estimate the probability that $E_{2,i+1} \cup F_1$ is good given that $E_{2,i} \cup F_1$ is good.
	
	Consider vertex $v_{i+1}$. By Property (c) of Lemma \ref{l:red-forest},
	$v_{i+1}$ has at most $(1+o_n(1))nf_k$ neighbors with in-degree $k-2$ in $F_1$ (recall that
	$f_k = 1-\frac{1}{e}\sum_{i=0}^{k-3}\frac{1}{i!}$).
	Thus, there is a subset $S$ of at least $r-f_k(1+o_n(1))n-i$ neighbors of $v_{i+1}$ in $G$ which still have in-degree at most $k-3$ in $E_{2,i} \cup F_1$.
	Now, if the component of $v_{i+1}$ in $E_{2,i} \cup F_1$ has fewer than $n/\ln n$ vertices, then further remove from $S$ all vertices of that component. In any case, $|S| \ge r-f_k(1+o_n(1))n-i- n/\ln n$. The probability that $\Gamma_2(v_{i+1}) \in S$ is therefore at least
	\begin{align*}
	\frac{r-f_k(1+o_n(1))n-i-\frac{n}{\ln n}}{r} & = 1 - \frac{f_k(1+o_n(1))n+i+\frac{n}{\ln n}}{r}\\
	& \ge 1 - \frac{f_k(1+o_n(1))n+\frac{2n}{e(k-1)!}+\frac{n}{\ln n}}{n/(k+1)}\\
	& = 1 - (k+1)(f_k+g_k)-o_n(1)\;.
	\end{align*}
Now, to have $\Gamma_2(v_{i+1}) \in S$ means that we are not creating any new components of size smaller than $n/\ln n$, so all components of size at most $n/\ln n$ up until now are still trees. It further means that $E_{2,i+1} \cup F_1$ still has maximum in-degree at most
	$k-2$. In other words, it means that $E_{2,i+1} \cup F_1$ is good.
	We have therefore proved that the final $E_2 \cup F_1$ is good with probability at least
	$$
	(1 - (k+1)(f_k+g_k)-o_n(1))^{|X_1|} \ge (1 - (k+1)(f_k+g_k)-o_n(1))^{n g_k}\;.
	$$
	Finally, observe that for $E_2 \cup F_1$ to be good simply means that it belongs to ${\cal H}_{k-1}(G) \cap {\cal H}^*_{\ln n}(G)$.
\end{proof}

\begin{lemma}\label{l:regular}
	Let $k \ge 5$. Then,
	$$
	P_{k,0,\ln n}(G)^{1/n} \ge (1-o_n(1))z_k\;.
	$$
\end{lemma}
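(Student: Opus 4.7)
The plan is to combine Lemma \ref{l:red-forest} with Lemma \ref{l:successful-not3} inside the two-stage model, using the carefully chosen weights $p_1 = 1-g_k$ and $p_2 = g_k$. The choice $p_2 = g_k$ is dictated by what must appear in the final bound: it is precisely the value that maximizes $x^{g_k}(1-x)^{1-g_k}$ over $x \in [0,1]$, so matching it to the density of $X_1$ is what forces the answer to come out as $z_k$ and nothing smaller.

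I would lower-bound $P_{k,0,\ln n}(G)$ by the joint probability of three events. Event $A$: $\vec{G}_1$ is successful, with a designated forest $F_1$ and associated set $X_1$ of out-degree-zero vertices; by Lemma \ref{l:red-forest} this has probability at least $1/20$ and satisfies $|X_1| \le n g_k$. Event $B$ (conditional on $A$): every $v \in X_1$ selects $\Gamma_2(v)$ and every $v \notin X_1$ selects $\Gamma_1(v)$ in the per-vertex stage; since these selections are independent of $\vec{G}_1,\vec{G}_2$ and of each other, $\Pr[B \mid A] = g_k^{|X_1|}(1-g_k)^{n-|X_1|}$. Event $C$ (conditional on $A,B$): the final orientation $\vec{G}$, which by construction equals $F_1 \cup E_2$, lies in $\mathcal{H}_{k-1}(G)\cap \mathcal{H}^*_{\ln n}(G)$; this is exactly the event bounded below in Lemma \ref{l:successful-not3}.

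Two small observations close the argument. First, $g_k < 1/2$ for all $k \ge 5$ (in fact $g_k \le 1/(12e)$), so $s \mapsto g_k^s(1-g_k)^{n-s}$ is decreasing in $s$ and $\Pr[B \mid A] \ge g_k^{n g_k}(1-g_k)^{n(1-g_k)}$ uniformly in the successful realization of $\vec{G}_1$. Second, any orientation with maximum in-degree at most $k-2$ has no vertex of in-degree $k-1$, so $\mathcal{H}_{k-1}(G)\cap\mathcal{H}^*_{\ln n}(G)\subseteq \mathcal{H}_{k,0}(G)\cap \mathcal{H}^*_{\ln n}(G)$. Multiplying the three lower bounds and taking $n$-th roots absorbs the constant $1/20$ into a $1-o_n(1)$ factor and yields
$P_{k,0,\ln n}(G)^{1/n} \ge (1-o_n(1))\bigl(1-(k+1)(f_k+g_k)\bigr)^{g_k}(1-g_k)^{1-g_k} g_k^{g_k} = (1-o_n(1))z_k$, as required.

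I do not anticipate any serious obstacle: all of the combinatorial and probabilistic work is already done in Lemmas \ref{l:red-forest} and \ref{l:successful-not3}, and what remains is bookkeeping. The only delicate point is the pessimization that replaces $|X_1|$ by its worst-case value $n g_k$, and confirming that this is consistent with (indeed responsible for) the choice $p_2 = g_k$; both follow from the elementary monotonicity of $g_k^s(1-g_k)^{n-s}$ once $g_k < 1/2$ is verified.
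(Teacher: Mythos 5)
Your proposal is correct and follows essentially the same route as the paper: the two-stage model with $p_2=g_k$, Lemma \ref{l:red-forest} for the success of $\vec{G}_1$ (giving $|X_1|\le ng_k$), Lemma \ref{l:successful-not3} for completing $F_1\cup E_2$ into an element of ${\cal H}_{k-1}(G)\cap{\cal H}^*_{\ln n}(G)={\cal H}_{k,0}(G)\cap{\cal H}^*_{\ln n}(G)$, and the second-stage factor $(1-g_k)^{n-|X_1|}g_k^{|X_1|}\ge (1-g_k)^{n(1-g_k)}g_k^{ng_k}$ before taking $n$-th roots. Your explicit remarks on the monotonicity in $|X_1|$ (using $g_k<1/2$) and on why $p_2=g_k$ is the optimizing choice are just a more detailed write-up of the same bookkeeping the paper performs.
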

\begin{proof}
	Using the two-stage model, we have by Lemma \ref{l:red-forest} that $\vec{G}_1$ is successful with probability at least $\frac{1}{20}$. Thus, by Lemma \ref{l:successful-not3}, with probability at least
	$$
	\frac{1}{20}(1 - (k+1)(f_k+g_k)-o_n(1))^{n g_k}
	$$
	the following holds:
	There is an out-degree one orientation $\vec{G}$ consisting of $x \ge n-\frac{2n}{e(k-1)!}$ edges
	of $\vec{G}_1$, and hence at most $n-x \le ng_k$ edges of $\vec{G}_2$, which is in ${\cal H}_{k-1}(G) \cap {\cal H}^*_{\ln n}(G)$ (observe that being in ${\cal H}_{k-1}(G)$ is the same as being in ${\cal H}_{k,0}$, i.e. there are zero vertices with in-degree $k-1$ since every vertex has maximum in-degree at most $k-2$).
	
	Assuming that this holds, let $X$ be the set of vertices whose out-edge in $\vec{G}$ is from $\vec{G}_1$. Now let $p_1+p_2=1$ be the probabilities associated with the two-stage model where we will
	use $p_2 < \frac{1}{2}$. The probability that in the second part of the two-stage model,
	each vertex $v \in X$ will indeed choose $\Gamma_1(v)$ and each vertex $v \in V(G) \setminus X$ will indeed choose $\Gamma_2(v)$ is precisely
	$$
	p_1^{x}{p_2}^{n-x} \ge (1-p_2)^{n-ng_k}{p_2}^{ng_k}\;.
	$$
	Optimizing, we will choose $p_2=g_k$.
	Recalling that the final outcome of the two-stage model is a completely random element of ${\cal H}(G)$,
	we have that
	$$
	P_{k,0,\ln n}(G) \ge
	\frac{1}{20}(1 - (k+1)(f_k+g_k)-o_n(1))^{n g_k}
	(1-g_k)^{n-ng_k}{g_k}^{ng_k}\;.
	$$
	Taking the $n$'th root from both sides and recalling that
	$z_k=(1 - (k+1)(f_k+g_k))^{g_k}(1-g_k)^{1-g_k}{g_k}^{g_k}$ yields the lemma.
\end{proof}

\begin{proof}[Proof of Theorem \ref{t:regular} for $k \ge 5$]
	By Lemma \ref{l:regular} we have that $P_{k,0,\ln n}(G)^{1/n} \ge (1-o_n(1))z_k$.
	As trivially $P_{k,0,\ln n}(G) \le P_{k,n/(7k),\ln n}(G)$ we have by Lemma \ref{l:completing}
	that
	$$
	c_k(G)^{1/n} \ge (1-o_n(1))d(G)^{1/n} (1-o_n(1))z_k = (1-o_n(1))r \cdot z_k\;.
	$$
\end{proof}

\subsection{The cases $k =3$ and $k=4$}

Lemma \ref{l:successful-not3} doesn't quite work when $k \in \{3,4\}$ as the constant
$1-(k+1)(f_k+g_k)$ is negative in this case ($f_4=1-2/e$ and $g_4=1/(3e)$).
To overcome this, we need to make several considerable adjustments in our arguments.
Among others, this will require using the $K$-stage model for $K$ relatively large
($K = 20$ when $k=3$ and $K=5$ when $k=4$ will suffice).
Recall that in this model we have randomly chosen out-degree one orientations $\vec{G}_1,\ldots,\vec{G}_K$.
Define the following sequence:
$$
q_i = 
\begin{cases}
	\frac{1}{e}, & \text{for } i=1\\
	\frac{q_{i-1}}{e^{q_{i-1}}}, & \text{for } i > 1\;.
\end{cases}
$$
Slightly abusing notation, for sets of edges $F_1,\ldots,F_i$ where $F_j \subset E(\vec{G}_j)$ we let $\cup_{j=1}^i F_j$ denote the graph whose edge set is the union of these edge sets.
\begin{definition}\label{def:successful}
	For $1 \le i \le K-1$ we say that $\vec{G}_i$ is {\em successful} if $\vec{G}_i$ has a subset of edges $F_i$ such that all the following hold:\\
	(a) $i=1$ or $\vec{G}_{i-1}$ is successful (so the definition is recursive).\\
	(b) $F_1,\ldots,F_i$ are pairwise-disjoint and $\cup_{j=1}^i F_j$ is a forest.\\
	(c) The maximum in-degree and maximum out-degree of $\cup_{j=1}^i F_j$ is at most $1$.\\
	(d) $\cup_{j=1}^i F_j$ has $(1 \pm o_n(1))nq_i$ vertices with in-degree $0$.\\
	(e) For all $v \in V(G)$ the number of $G$-neighbors of $v$ having in-degree $0$ in
	$\cup_{j=1}^i F_j$ is $(1 \pm o_n(1))rq_i$.\\
	(f) For all $v \in V(G)$ the number of $G$-neighbors of $v$ having out-degree $0$ in
	$\cup_{j=1}^i F_j$ is $(1 \pm o_n(1))rq_i$.
\end{definition}
\begin{lemma}\label{l:a-f}
	For all $1 \le i \le K-1$, $\vec{G}_i$ is successful with probability at least $\frac{1}{2^i}$.
\end{lemma}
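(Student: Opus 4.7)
The plan is to proceed by induction on $i$, establishing that conditional on $\vec{G}_{i-1}$ being successful, $\vec{G}_i$ is successful with conditional probability at least $1/2$ (in fact $1 - o_n(1)$), so that multiplying conditional probabilities gives the bound $1/2^i$. At each stage the construction of $F_i$ from the randomness of $\vec{G}_i$ is nearly deterministic, and the recursion $q_i = q_{i-1} e^{-q_{i-1}}$ emerges naturally from the Poisson-style avoidance probability $(1-1/r)^{|A_{\mathrm{out}} \cap N_G(w)|} \approx e^{-q_{i-1}}$.

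For the base case $i=1$, I would define $F_1$ by retaining, for every vertex $w$ of positive in-degree in $\vec{G}_1$, exactly one uniformly random incoming edge. The resulting digraph has maximum in- and out-degree at most $1$, so its underlying graph is a disjoint union of paths and directed cycles; by Lemma \ref{l:compoents} the number of cycles is at most $(k+1)\ln n$ in expectation, so via Markov we may with probability $1 - o_n(1)$ delete one edge per cycle, obtaining a forest at the cost of an $o_n(n)$-sized perturbation to subsequent counts. Property (d) holds because the in-degree-$0$ vertices of $F_1$ coincide (modulo this perturbation) with $B_0^{\vec{G}_1}$, of size $(1\pm o_n(1))n/e = (1\pm o_n(1))nq_1$ by Lemma \ref{l:neighbors-ok} with $X = V(G)$. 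Property (e) follows from the same lemma with $X = N_G(v)$ and a union bound over $v$. For property (f), vertex $u$ has out-degree $0$ in $F_1$ iff $u$ is not the chosen predecessor of $\Gamma_1(u)$; in the near-Poisson limit of Lemma \ref{l:prob-indeg} the probability of this equals $\mathbb{E}[1 - 1/\operatorname{indeg}_{\vec{G}_1}(\Gamma_1(u))] = 1 - (1-1/e) = 1/e = q_1$, and per-vertex concentration at scale $\sqrt{n \ln n} = o_n(r)$ follows from Azuma's inequality with a union bound.

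For the inductive step, let $H = \bigcup_{j=1}^{i-1} F_j$, let $A_{\mathrm{in}}$ denote its in-degree-$0$ vertices and $A_{\mathrm{out}}$ its out-degree-$0$ vertices, each of size $(1 \pm o_n(1)) nq_{i-1}$ by hypothesis. Construct $F_i$ by, for every $w \in A_{\mathrm{in}}$ that is the target of at least one edge $(v,w) \in \vec{G}_i$ with $v \in A_{\mathrm{out}} \cap N_G(w)$, selecting one such $v$ uniformly and adding $(v, w)$ to $F_i$. By construction the union $\bigcup_{j=1}^i F_j$ retains maximum in- and out-degree at most $1$; the only new cycles are directed cycles formed when an added edge goes from the sink back to the source of a single $H$-path, and the expected number of such bad edges is at most $nq_{i-1}/r = O(k)$, which we again break at negligible cost. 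For each $w \in A_{\mathrm{in}}$, property (f) of $H$ gives $|A_{\mathrm{out}} \cap N_G(w)| = (1 \pm o_n(1))rq_{i-1}$, so the probability that $w$ retains in-degree $0$ equals $(1-1/r)^{(1\pm o_n(1))rq_{i-1}} = (1 \pm o_n(1))e^{-q_{i-1}}$, producing the desired recursion for property (d); the analogous calculation on $A_{\mathrm{out}}$ is symmetric. Per-vertex counts in (e) and (f) concentrate via Azuma's inequality, applied by exposing the out-choices of $\vec{G}_i$ one vertex at a time (each exposure affects any fixed count by $O(1)$), giving concentration at scale $\sqrt{n \ln n}$ that survives a union bound over $v$.

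The main obstacle is the per-vertex concentration: one must verify bounded Lipschitz coefficients uniformly in $v$, and that the mild positive correlations among the events ``$w$ retains in-degree $0$'' (arising because a single $v \in A_{\mathrm{out}}$ can pull toward only one $w$) do not spoil concentration for any fixed count. Both are handled by Azuma with $O(1)$ Lipschitz constants, but the careful bookkeeping of the accumulating $o_n(1)$ deviations through the (constantly many) induction steps is where most of the technical work lies.
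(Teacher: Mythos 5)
Your overall strategy is the one the paper uses: induct on $i$, show that conditioned on $\vec{G}_{i-1}$ being successful the next stage succeeds with probability at least $\tfrac12$, build $F_i$ from the $\vec{G}_i$-edges leaving the out-degree-$0$ set of $H=\cup_{j<i}F_j$, and derive the recursion $q_i=q_{i-1}e^{-q_{i-1}}$ from the $(1-1/r)^{(1\pm o_n(1))rq_{i-1}}$ avoidance probability, with Azuma giving per-vertex concentration for items (d)--(f). Indeed, your rule ``each in-degree-$0$ target keeps one uniformly random incoming edge'' is exactly the paper's deletion set $E_i^{**}$, and your $\mathbb{E}[1/t]$-type computation (yielding retention probability $1-e^{-q_{i-1}}$, hence out-degree-$0$ survival probability $e^{-q_{i-1}}$) is the same calculation the paper performs for item (f).

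There is, however, one genuine flaw: your treatment of acyclicity (item (b)). You claim the only new cycles are those created by an edge running from the sink back to the source of a \emph{single} $H$-path, and you bound only those, getting expectation at most $nq_{i-1}/r$. This is not true: since components of $H$ are directed paths and the new edges go from path-sinks to path-sources, a cycle can be any cyclic concatenation of several full $H$-paths (sink of $P_1$ to source of $P_2$, \dots, sink of $P_m$ to source of $P_1$), and such multi-path cycles occur with probability comparable to the single-path ones, so your count misses most of the cycles. The conclusion is salvageable, but it needs an argument covering all cycle lengths: either sum the expected number of $m$-path cycles over all $m$ (each additional sink-to-source link costs a factor roughly $q_{i-1}\cdot\frac{1-e^{-q_{i-1}}}{q_{i-1}}<1$, so the series is geometric and the total expectation is still $O(k)$), or do what the paper does, which avoids counting cycles directly: call $v$ \emph{friendly} if $\Gamma_i(v)$ lies in $v$'s current component and that component has at most $\sqrt{n}$ vertices; the expected number of friendly vertices is at most $|X_{i-1}|\sqrt{n}/r=O(\sqrt{n})$, so with probability $1-o_n(1)$ one can delete all friendly edges plus at most one edge from each of the at most $\sqrt{n}$ components of size exceeding $\sqrt{n}$, i.e.\ $O(n^{3/4})$ edges in total, which is negligible for the linear-order quantities in items (d)--(f). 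With that repair (and the routine verification that the $o(n)$ deletions do not disturb the concentration statements), your proof matches the paper's.
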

\begin{proof}
	We prove the lemma by induction. Observe that for $i \ge 2$, it suffices to prove that, given that $\vec{G}_{i-1}$ is successful, then $\vec{G}_i$ is also successful with probability $\frac{1}{2}$. For the base case $i=1$ it just suffices to prove
	that items (b)-(f) in Definition \ref{def:successful} hold with probability $\frac{1}{2}$ without the preconditioning item (a), so this is easier than
	proving the induction step; thus we shall only prove the induction step.
	In other words, we assume that $\vec{G}_{i-1}$ is successful and given this assumption, we prove that $\vec{G}_i$ is successful with probability $\frac{1}{2}$.
	
	For notational convenience, let $F = \cup_{j=1}^{i-1} F_j$.
	Let $X_{i-1}$ be the set of vertices with out-degree $0$ in $F$. Since $\vec{G}_{i-1}$ is successful
	we have that $|X_{i-1}|=(1 \pm o_n(1))nq_{i-1}$ (in a digraph with maximum in-degree $1$ and maximum out-degree $1$, the number of vertices with in-degree $0$ equals the number of vertices with out-degree $0$). Consider the set of edges of $\vec{G}_i$ emanating from $X_{i-1}$, denoting them by $E_i = \{(v,\Gamma_i(v))\,|\, v \in X_{i-1}\}$. By adding $E_i$ to $F$ we therefore obtain an out-degree one orientation of $G$, which we denote by $E_i \cup F$. We would like to prove that by deleting just a small amount of edges from $E_i$, we have a subset $F_i \subset E_i$ such that $F_i \cup F$ satisfies items (b)-(f) of Definition \ref{def:successful}. Fix some ordering of $X_{i-1}$, say $v_1,\ldots,v_{|X_{i-1}|}$.
	Let $E_{i,h} \cup F$ be the graph obtained after adding to
	$F$ the edges $(v_j,\Gamma_i(v_j))$ for $1 \le j \le h$. Also let $E_{i,0} = \emptyset$.
	
	We start by taking care of Item (b). For $0 \le h < |X_{i-1}|$, we call $v_{h+1}$ {\em friendly} if the component of $v_{h+1}$ in
	$E_{i,h} \cup F$ has at most $\sqrt{n}$ vertices and $\Gamma_i(v_{h+1})$ belongs to that component. The probability of being friendly is therefore at most $\sqrt{n}/r$, so the expected number of friendly vertices is at most
	$|X_{i-1}|\sqrt{n}/r \le (1\pm o_n(1))nq_{i-1}\sqrt{n}/(n/5) < n^{2/3}$
	(recall that we assume that $r \ge n/(k+1)$ and that $k \in \{3,4\}$ so $r \ge n/5$).
	By Markov's inequality, with probability $p_{(b)}=1-o_n(1)$, there are at  most $n^{3/4}$ friendly
	vertices. But observe that removing from $E_i \cup F$ the edges of $E_i$ emanating from friendly vertices results in a digraph with maximum out-degree $1$ in which every component with at most $\sqrt{n}$ vertices
	is a tree. Thus, with probability $p_{(b)}=1-o_n(1)$ we can remove a set $E_i^* \subset E_i$ of at most $n^{3/4}+n/\sqrt{n} =n^{3/4}+\sqrt{n} < 2n^{3/4}$ edges from $E_i$ such that
	$(E_i \setminus E_i^*) \cup F$ still constitutes a forest (recall that $F$ is a forest since $\vec{G}_{i-1}$ is successful).
	
	We next consider Item (c). While trivially the maximum out-degree of $E_i \cup F$ is one (being an out-degree one orientation), this is not so for the in-degrees. It could be
	that a vertex whose in-degree in $F$ is $0$ or $1$ has significantly larger in-degree after adding $E_i$. So, we perform the following process for reducing the in-degrees. For each $v \in V(G)$
	whose in-degree in $E_i \cup F$ is $t > 1$, we randomly delete precisely $t-1$ edges of $E_i$
	entering $v$ thereby reducing $v$'s in-degree to $1$ (note: this means that if $v$'s in-degree in $F$ is $1$ we remove all edges of $E_i$ entering it and if $v$'s in-degree in $F$ is $0$ we just keep one edge of $E_i$ entering it, and the kept edge is chosen uniformly at random).
	Let $E_i^{**}$ be the edges removed from $E_i$ by that process. Then we have that
	$(E_i \setminus E_i^{**}) \cup F$ has maximum in-degree at most $1$ and maximum out-degree at most $1$.
	
	We next consider Item (d). For $u \in V(G)$, let $W_u$ denote the $G$-neighbors of $u$ in $X_{i-1}$.
	Since $\vec{G}_{i-1}$ is successful, we have that $|W_u|=(1 \pm o_n(1))rq_{i-1}$.
	Let $Z$ be the number of vertices with in-degree $0$ in $E_i \cup F$. Suppose $u$ has in-degree $0$ in $F$. In order for $u$ to remain with in-degree $0$ in $E_i \cup F$ it must be that each vertex $v \in W_u$ has $\Gamma_i(v) \neq u$.
	The probability of this happening is precisely $(1-1/r)^{|W_u|}=(1-1/r)^{(1 \pm o_n(1))rq_{i-1}}$. Since $\vec{G}_{i-1}$ is successful, there are  $(1 \pm o_n(1))nq_{i-1}$ vertices $u$ with in-degree $0$ in $F$. We obtain that
	$$
	\mathbb{E}[Z] = (1 \pm o_n(1))nq_{i-1}\left(1-\frac{1}{r}\right)^{(1 \pm o_n(1))rq_{i-1}}=(1 \pm o_n(1))nq_i\;.
	$$
	We can prove that $Z$ is tightly concentrated around its expectation as we have done in Lemma \ref{l:neighbors-ok} using martingales. Let $Z_0=\mathbb{E}[Z]$ and let $Z_h$ be the conditional expectation of $Z$ after the edge $(v_h,\Gamma_i(v_h))$ of $E_i$ has been exposed, so that we have
	$Z_{|X_{i-1}|}=Z$. Then, $Z_0,Z_1,\ldots,Z_{|X_{i-1}|}$ is a martingale satisfying the Lipschitz condition (since the exposure of an edge can change the amount of vertices with in-degree $0$ by at most one), so by Azuma's inequality, for every $\lambda > 0$,
	$$
	\Pr \left[\, \left| Z-E[Z] \right| > \lambda |X_{i-1}| \right] < 2e^{-\lambda^2/2}\;.
	$$
	In particular, $Z=(1 \pm o_n(1))nq_i$ with probability $p_{(d)}=1-o_n(1)$ (the $o_n(1)$ term in the probability can even be taken to be exponentially small in $n$).
	
	We next consider Item (e) whose proof is quite similar to the proof of Item (d) above.
	Let $Z_v$ denote the number of
	$G$-neighbors of $v$ with in-degree $0$ in $E_i \cup F$. Since $\vec{G}_{i-1}$ is successful,
	there are $(1 \pm o_n(1))rq_{i-1}$ $G$-neighbors of $v$ with in-degree $0$ in $F$ so the expected value of $Z_v$ is
	$$
	\mathbb{E}[Z_v] = (1 \pm o_n(1))rq_{i-1}\left(1-\frac{1}{r}\right)^{(1 \pm o_n(1))rq_{i-1}}=(1 \pm o_n(1))rq_i\;.
	$$
	As in the previous paragraph, we apply Azuma's inequality to show that $Z_v=(1 \pm o_n(1))rq_i$ with probability $1-o_n(1/n)$, so for all $v \in V(G)$ this holds with probability $p_{(e)}=1-o_n(1)$.
	
	We finally consider Item (f) which is somewhat more delicate as we have to make sure that after removal of $E_i^{**}$, the vertices of $X_{i-1}$ that remain with out-degree $0$ are distributed roughly equally among all neighborhoods of vertices of $G$.
	Fix some $u \in V(G)$, and consider again $W_u$, the $G$-neighbors of $u$ in $X_{i-1}$, recalling that
	$|W_u|=(1 \pm o_n(1))rq_{i-1}$. Suppose $v \in W_u$. We would like to estimate the probability that
	$(v,\Gamma_i(v)) \notin E_i^{**}$. For this to happen, a necessary condition is that $\Gamma_i(v)$
	has in-degree $0$ in $F$. As there are $(1\pm o_n(1))rq_{i-1}$ $G$-neighbors of $v$ with in-degree
	$0$ in $F$, this occurs with probability $q_{i-1}(1\pm o_n(1))$. Now, given that $\Gamma_i(v)$
	has in-degree $0$ in $F$, suppose that $\Gamma_i(v)$ has $t$ in-neighbors in $E_i$. Then, the probability that $(v,\Gamma_i(v)) \notin E_i^{**}$ is  $1/t$. As the probability that $\Gamma_i(v)$ has $t$ in-neighbors in $E_i$ (including $v$) is
	$$
	\binom{s-1}{t-1}\frac{1}{r^{t-1}}\left(1-\frac{1}{r}\right)^{s-t} = (1 \pm o_n(1))\frac{q_{i-1}^{t-1}}{(t-1)!e^{q_{i-1}}}
	$$
	where $s=(1 \pm o_n(1))rq_{i-1}$ is the number of $G$-neighbors of $\Gamma_i(v)$ in $X_{i-1} \setminus \{v\}$. We therefore have that
	$$
	\Pr[(v,\Gamma_i(v)) \notin E_i^{**}] = (1 \pm o_n(1))q_{i-1}\sum_{t=1}^\infty \frac{1}{t} \cdot \frac{q_{i-1}^{t-1}}{(t-1)!e^{q_{i-1}}} =  (1 \pm o_n(1))\left(1-e^{-q_{i-1}}\right)\;.
	$$
	Since $|W_u|=(1 \pm o_n(1))rq_{i-1}$, we have from the last equation that the expected number of neighbors of $u$ with out-degree $0$ in $F \cup (E_i \setminus E_i^{**})$ is
	$$
	(1 \pm o_n(1))rq_{i-1}e^{-q_{i-1}}=(1 \pm o_n(1))rq_i\;.
	$$
	Once again, using Azuma's inequality as in the previous cases, we have that the 
	number of neighbors of $u$ with out-degree $0$ in $F \cup (E_i \setminus E_i^{**})$ is $(1 \pm o_n(1))rq_i$ with probability $1-o(1/n)$, so this holds with probability $p_{(f)}=1-o_n(1)$
	for all $u \in V(G)$.
	
	Finally, we define $F_i = E_i \setminus (E_i^* \cup E_i^{**})$ so items
	(b)-(f) hold for $F_i \cup F$ with probability at least $1-(1-p_{(b)})-(1-p_{(d)})-(1-p_{(e)})-(1-p_{(f)}) > \frac{1}{2}$ (recall that $|E_i^*|=o(n)$ so its
	removal does not change the asymptotic linear quantities stated in items (d),(e),(f)).
\end{proof}

By Lemma \ref{l:a-f}, with probability at least $\frac{1}{2^i}$, we have that $\vec{G}_i$ is successful.
Assuming that $\vec{G}_i$ is successful, let $F_1,\ldots,F_i$ satisfy Definition \ref{def:successful}.
Let $X_i$ be the set of vertices with out-degree $0$ in $\cup_{j=1}^i F_j$. Since $\vec{G}_i$ is successful we have that $|X_i|=(1 \pm o_n(1))nq_i$.
Consider the set of edges of $\vec{G}_{i+1}$ emanating from $X_i$, denoting them by $E_{i+1} = \{(v,\Gamma_{i+1}(v))\,|\, v \in X_i\}$. By adding $E_{i+1}$ to $\cup_{j=1}^i F_j$ we therefore obtain an
out-degree one orientation of $G$, which we denote by $E_{i+1} \cup (\cup_{j=1}^i F_j)$.

\begin{lemma}\label{l:successful-3}
	Let $i \ge 4$ \footnote{We require this assumption so that the value $1-5q_i$ used in the lemma, is positive. Indeed, already $q_4 = 0.162038...$ satisfies this (observe also that $q_i=q_{i-1}/e^{q_{i-1}}$ is monotone decreasing).}.
	Given that $\vec{G}_i$ is successful, and given the corresponding forest $\cup_{j=1}^i F_j$,
	the probability that
	$(E_{i+1} \cup (\cup_{j=1}^i F_j)) \in {\cal H}_{3,nq_i(1 \pm o_n(1))}(G) \cap {\cal H}^*_{\ln n}(G)$
	 is at least
	$$
	(1 - 5q_i-o_n(1))^{nq_i}\;.
	$$
\end{lemma}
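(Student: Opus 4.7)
The plan is to mimic the sequential-exposure argument of Lemma \ref{l:successful-not3}, exploiting the much stronger starting condition that $\cup_{j=1}^i F_j$ already has maximum in-degree $1$ (Definition \ref{def:successful}(c)), which is what makes the constant $1-5q_i$ positive for $i \ge 4$ and thus allows the case $k \in \{3,4\}$ to go through. Fix an arbitrary ordering $v_1,\ldots,v_{|X_i|}$ of $X_i$. For $0 \le h \le |X_i|$ set $E_{i+1,h} = \{(v_j,\Gamma_{i+1}(v_j)) : 1 \le j \le h\}$, and let $T_h$ be the set of distinct heads appearing in $E_{i+1,h}$. I will declare $v_h$ \emph{good} at step $h$ if (i) $\Gamma_{i+1}(v_h) \notin T_{h-1}$, and (ii) whenever the current component of $v_h$ in $(\cup_{j=1}^i F_j)\cup E_{i+1,h-1}$ has fewer than $n/\ln n$ vertices, $\Gamma_{i+1}(v_h)$ lies outside that component.

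Assuming every $v_h$ is good, the final orientation $E_{i+1}\cup(\cup_{j=1}^i F_j)$ will lie in the advertised set. Clause (i) ensures each vertex is the head of at most one edge of $E_{i+1}$, so combined with the initial max in-degree $1$ the final max in-degree is at most $2$, and the number of in-degree-$2$ vertices is at most $|X_i| = nq_i(1\pm o_n(1))$, giving membership in ${\cal H}_{3,nq_i(1\pm o_n(1))}(G)$. Clause (ii) ensures that any cycle created during the process lies in a component of size at least $n/\ln n$ at the time of its creation, and hence also in the final orientation since components only grow; as a result the final out-degree-one orientation has at most $\ln n$ components and therefore at most $\ln n$ directed cycles, giving membership in ${\cal H}^*_{\ln n}(G)$.

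To bound the per-step failure probability, I use $r \ge n/(k+1) \ge n/5$ (since $k \in \{3,4\}$ in these cases) and count bad $G$-neighbors of $v_h$: at most $|T_{h-1}| + n/\ln n \le nq_i(1+o_n(1)) + n/\ln n$. Thus, conditional on any history of the first $h-1$ steps, $v_h$ is good with probability at least $1 - (k+1)q_i(1+o_n(1)) - (k+1)/\ln n \ge 1 - 5q_i - o_n(1)$. Multiplying these uniform conditional bounds over all $|X_i|$ steps gives success probability at least $(1-5q_i - o_n(1))^{|X_i|}$, and since $|X_i| = nq_i(1\pm o_n(1))$, the discrepancy between $|X_i|$ and $nq_i$ in the exponent may be absorbed into the $o_n(1)$ in the base (using $1-5q_i > 0$ for $i\ge 4$), yielding the claimed $(1-5q_i - o_n(1))^{nq_i}$.

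The main obstacle is not any single step but the bookkeeping: verifying that the two-clause goodness condition simultaneously delivers both membership facts — the count (not just the maximum) of in-degree-$2$ vertices for ${\cal H}_{3,\cdot}(G)$, and the cycle bound via monotonicity of components for ${\cal H}^*_{\ln n}(G)$ — and then executing the exponent reparametrization from $|X_i|$ to $nq_i$ without losing more than $o_n(1)$ in the base. Once these are handled, the per-step estimate is essentially the same union-bound calculation as in Lemma \ref{l:successful-not3}, only applied with the tighter starting in-degree available after $i$ successful rounds.
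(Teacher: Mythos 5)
Your proposal is correct and follows essentially the same route as the paper: expose the edges $(v_h,\Gamma_{i+1}(v_h))$ one at a time, maintain a goodness invariant that controls in-degrees and forbids closing cycles inside components of size below $n/\ln n$, and apply the per-step bound $1-5q_i-o_n(1)$ coming from $r\ge n/5$ and $|X_i|=nq_i(1\pm o_n(1))$. The only differences are cosmetic: you forbid repeated heads among the new edges and argue the cycle count via creation-time component sizes, whereas the paper tracks vertices of current in-degree $2$ and keeps small components as trees — both yield the same probability bound and the same membership conclusions.
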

\begin{proof}
	Fix an arbitrary ordering of the vertices of $X_i$, say $v_1,\ldots,v_{|X_i|}$.
	We consider the edges $(v_h,\Gamma_{i+1}(v_h))$ one by one, and let
	$E_{i+1,h} \cup (\cup_{j=1}^i F_j)$ be the graph obtained after adding to
	$\cup_{j=1}^i F_j$ the edges $(v_\ell,\Gamma_{i+1}(v_\ell))$ for $1 \le \ell \le h$. Also let $E_{i+1,0} = \emptyset$.
	We say that $E_{i+1,h} \cup (\cup_{j=1}^i F_j)$ is {\em good} if it satisfies the following properties:\\
	(i) The in-degree of each vertex of $E_{i+1,h} \cup (\cup_{j=1}^i F_j)$ is at most $2$.\\
	(ii) Every component of $E_{i+1,h} \cup (\cup_{j=1}^i F_j)$ with fewer than $n/\ln n$ vertices is a tree.\\
	(iii) The number of vertices with in-degree $2$ in $E_{i+1,h} \cup (\cup_{j=1}^i F_j)$ is at most $h$.
	
	Trivially, $E_{i+1,0} \cup (\cup_{j=1}^i F_j)=\cup_{j=1}^i F_j$ is good, since by
	Definition \ref{def:successful}, $\cup_{j=1}^i F_j$ is a forest where the in-degree of each vertex is at most $1$. We estimate the probability that $E_{i+1,h+1} \cup (\cup_{j=1}^i F_j)$ is good given that $E_{i+1,h} \cup (\cup_{j=1}^i F_j)$ is good.
	
	So, consider now the vertex $v_{h+1}$. Since $E_{i+1,h} \cup (\cup_{j=1}^i F_j)$ is assumed good, $v_{h+1}$ has at most $h$ neighbors with in-degree $2$ in $E_{i+1,h} \cup (\cup_{j=1}^i F_j)$.
	Thus, there is a subset $S$ of at least $r-h$ $G$-neighbors of $v_{h+1}$ which still have in-degree at most $1$ in $E_{i+1,h} \cup (\cup_{j=1}^i F_j)$.
	If the component of $v_{h+1}$ in $E_{i+1,h} \cup (\cup_{j=1}^i F_j)$ has fewer than $n/\log n$ vertices, then also remove all vertices of this component from $S$.
	In any case we have that $|S| \ge r-h-n/\ln n$. The probability that $\Gamma_{i+1}(v_{h+1}) \in S$ is therefore at least
	$$
	\frac{r-h-\frac{n}{\ln n}}{r}  = 1 - \frac{h+\frac{n}{\ln n}}{r} \ge 1 - \frac{nq_i(1\pm o_n(1))+\frac{n}{\ln n}}{n/5} = 1 - 5q_i-o_n(1)\;.
	$$
	Now, to have $\Gamma_{i+1}(v_{h+1}) \in S$ means that we are not creating any new components of size smaller than $n/\ln n$, so all components of size at most $n/\ln n$ up until now are still trees and
	furthermore, $E_{i+1,h+1} \cup (\cup_{j=1}^i F_j)$ still has maximum in-degree at most $2$ and at most one additional vertex, namely $\Gamma_{i+1}(v_{h+1})$, which may become now of in-degree $2$, so it has at most $h+1$
	vertices with in-degree $2$. Consequently, $E_{i+1,h+1} \cup (\cup_{j=1}^i F_j)$ is good.
	We have therefore proved that the final $E_{i+1} \cup (\cup_{j=1}^i F_j)$ is good with probability at least
	$$
	(1 - 5q_i-o_n(1))^{|X_i|} \ge (1 - 5q_i-o_n(1))^{nq_i(1 \pm o_n(1))} = (1 - 5q_i-o_n(1))^{nq_i} \;.
	$$
	Finally, notice that the definition of goodness means that
	$(E_{i+1} \cup (\cup_{j=1}^i F_j)) \in {\cal H}_{3,nq_i(1 \pm o_n(1))}(G) \cap {\cal H}^*_{\ln n}(G)$.
\end{proof}

\begin{lemma}\label{l:regular-3}
	Let $K \ge 5$ be given.
	\begin{align*}
		P_{4,0,\ln n}(G)^{1/n} & \ge P_{3,nq_{K-1}(1 \pm o_n(1)),\ln n}(G)^{1/n} \\
		& \ge (1-o_n(1)) \frac{\left(1 - 5q_{K-1}\right)^{q_{K-1}}}{K}\;.
	\end{align*}
\end{lemma}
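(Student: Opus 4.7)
The plan is to parallel Lemma \ref{l:regular} but with the two-stage model replaced by the $K$-stage model using the uniform weights $p_c = 1/K$ for all $c = 1, \ldots, K$. Since Lemmas \ref{l:a-f} and \ref{l:successful-3} already do the substantive work, what remains is just a clean probabilistic composition.

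For the first inequality I would simply observe a set inclusion. An orientation in $\mathcal{H}_{4,0}(G)$ has maximum in-degree at most $3$ with zero vertices of in-degree exactly $3$, hence maximum in-degree at most $2$; but this is also the maximum in-degree condition defining $\mathcal{H}_{3,s}(G)$, for any $s$. Thus $\mathcal{H}_{3,s}(G) \cap \mathcal{H}^*_{\ln n}(G) \subseteq \mathcal{H}_{4,0}(G) \cap \mathcal{H}^*_{\ln n}(G)$ and the first inequality is immediate.

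For the main inequality I would run the $K$-stage model and multiply three lower bounds: (a) by Lemma \ref{l:a-f} with $i = K-1$, the orientation $\vec{G}_{K-1}$ is successful with probability at least $1/2^{K-1}$, and we fix the witnessing forests $F_1, \ldots, F_{K-1}$; (b) conditionally, Lemma \ref{l:successful-3} (applicable since $K-1 \ge 4$) gives that the extension $E_K \cup (\cup_{j=1}^{K-1} F_j)$ drawn from the independent $\vec{G}_K$ lies in $\mathcal{H}_{3, nq_{K-1}(1\pm o_n(1))}(G) \cap \mathcal{H}^*_{\ln n}(G)$ with probability at least $(1-5q_{K-1}-o_n(1))^{nq_{K-1}}$; (c) since $\cup_{j=1}^{K-1} F_j$ has maximum out-degree $1$, every vertex has a unique designated stage (the unique $c$ with an $F_c$-edge emanating from it, or $K$ if the vertex lies in $X_{K-1}$), so under uniform weights the second phase of the $K$-stage model selects the designated stage for every vertex with probability exactly $(1/K)^n$, in which case the final $\vec{G}$ coincides with the constructed good orientation.

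Combining these yields
$$ P_{3, nq_{K-1}(1 \pm o_n(1)), \ln n}(G) \;\ge\; \frac{1}{2^{K-1}} \cdot (1-5q_{K-1}-o_n(1))^{nq_{K-1}} \cdot \left(\frac{1}{K}\right)^{n}, $$
and taking $n$-th roots, with $(1/2^{K-1})^{1/n} = 1 - o_n(1)$ since $K$ is fixed, gives the claim. The only point worth pausing on is the choice of uniform weights: unlike Lemma \ref{l:regular}, where $p_2 = g_k$ was tuned to match the nibble, here the simpler choice $p_c = 1/K$ is already enough to produce exactly the $1/K$ factor in the stated bound, so no optimization over the stage probabilities is required.
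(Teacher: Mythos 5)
Your proposal is correct and follows essentially the same route as the paper: the first inequality via the inclusion $\mathcal{H}_{3,s}(G)\subseteq\mathcal{H}_{4,0}(G)$ (both sets consist of orientations of maximum in-degree at most $2$), and the main bound by combining Lemma \ref{l:a-f} with $i=K-1$, Lemma \ref{l:successful-3}, and the second phase of the $K$-stage model with uniform weights $p_c=1/K$ contributing the factor $(1/K)^n$ before taking $n$-th roots.
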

\begin{proof}
	The first inequality is trivial since an out-degree one orientation with maximum-in degree at most $3$
	has zero vertices with in-degree $4$ or larger. So, we only prove the second inequality.
	Consider the $K$-stage model, and let $i=K-1 \ge 4$.
	By Lemma \ref{l:a-f}, with probability at least $\frac{1}{2^i}$, $\vec{G}_i$ is successful.
	Thus, by Lemma \ref{l:successful-3} and Definition \ref{def:successful}, with probability at least
	$$
	\frac{1}{2^i}(1 - 5q_i -o_n(1))^{nq_i}
	$$
	the following holds:
	There is an out-degree one orientation $\vec{G}$ consisting of edges of $\vec{G_j}$ for $j=1,\ldots,K$,
	at most $nq_i(1 \pm o_n(1))$ of which are edges of $\vec{G}_K$, which is in
	${\cal H}_{3,nq_i(1 \pm o_n(1))}(G) \cap {\cal H}^*_{\ln n}(G)$.
	
	Assuming that this holds, for $j=1,\ldots,K$ let $X_i$ be the set of vertices whose out-edge in
	$\vec{G}$ is from $\vec{G}_j$.
	Now, let $p_1,\ldots,p_K$ with $p_1+\cdots+p_K=1$ be the probabilities associated with the $K$-stage model. The probability that in the second part of the $K$-stage model,
	each vertex $v \in X_j$ will indeed choose $\Gamma_j(v)$ is precisely
	$$
	\prod_{j=1}^K p_j^{|X_j|}\;.
	$$
	Using $p_j=\frac{1}{K}$ for all $p_j$'s and recalling that the final outcome of the $K$-stage model is a completely random element of ${\cal H}(G)$,
	we have that
	$$
	P_{3,nq_i(1 \pm o_n(1)),\ln n}(G)^{1/n} \ge
	\frac{1}{2^i}(1 - 5q_i -o_n(1))^{nq_i}\left(\frac{1}{K}\right)^n\;.
	$$
	Taking the $n$'th root from both sides yields the lemma.
\end{proof}

\begin{proof}[Proof of Theorem \ref{t:regular} for $k \in \{3,4\}$]
	Consider first the case $k=4$ where we will use $K=5$.
	A simple computation gives that $q_{K-1}=q_4 = 0.162038...$ so we have by Lemma \ref{l:regular-3}
	that
	$$
	P_{4,0,\ln n}(G)^{1/n} \ge (1-o_n(1)) \frac{\left(1 - 5q_4\right)^{q_4}}{5}=(1-o_n(1))0.1527...\;.
	$$
	As trivially $P_{4,0,\ln n}(G) \le P_{4,n/28,\ln n}(G)$ we have by Lemma \ref{l:completing}
	that
	$$
	c_4(G)^{1/n} \ge (1-o_n(1))d(G)^{1/n} (1-o_n(1))0.1527... = (1-o_n(1))d \cdot 0.1527...\;.
	$$
	Consider now the case $k=3$ where we will use $K=20$.
	A simple computation gives that $q_{K-1}=q_{19} =0.045821...$ so we have by Lemma \ref{l:regular-3}
	that
	\begin{align*}
	P_{3,n/21,\ln n}(G)^{1/n} & \ge P_{3,nq_{19}(1 \pm o_n(1)),\ln n}(G)^{1/n}\\
	& \ge
	 (1-o_n(1)) \frac{\left(1 - 5q_{19}\right)^{q_{19}}}{20}=(1-o_n(1))0.0494...\;.
	\end{align*}
	We now have by Lemma \ref{l:completing} that
	$$
	c_3(G)^{1/n} \ge (1-o_n(1))d(G)^{1/n} (1-o_n(1))0.0494... = (1-o_n(1))d \cdot 0.0494...\;.
	$$
\end{proof}

It is not too difficult to prove that if we additively increase the minimum degree requirement in Lemma \ref{l:extend-reg} by a small constant, then we can allow for many more vertices of degree $k$ in that lemma. This translates to an increase in the constants $z_3$ and $z_4$.
For example, in the case $k=4$ a minimum degree of $n/5+2$ already increases $z_4$ to about $0.4$
(instead of $z_4=0.1527...$ above) and in the case $k=3$ a minimum degree of $n/4+17$ increases $z_3$ to about $0.2$ (instead of $z_3=0.0494...$ above). However, we prefer to state Theorem \ref{t:regular}
in the cleaner form of minimum degree exactly $n/(k+1)$ for all $k \ge 3$.

\subsection{Regular connected graphs with high minimum degree and $c_k(G)=0$}\label{sub:construct}

In this subsection we show that the requirement on the minimum degree in Theorem \ref{t:regular} is essentially tight. For every $k \ge 2$ and for infinitely many $n$,
there are connected $r$-regular graphs $G$ with $r=\lfloor n/(k+1) \rfloor -2$ for which $c_k(G)=0$.
We mention that a construction for the case $k=2$ is proved in \cite{CS-2013}.

Let $t \ge k+4$ be odd. Let $G_0,\ldots,G_k$ be pairwise vertex-disjoint copies of $K_t$.
Designate three vertices of each $G_i$ for $1 \le i \le k$ where the designated vertices of $G_i$
are $v_{i,0},v_{i,1},v_{i,2}$. Also designate $k+2$ vertices of $G_0$ denoting them by
$v_{0,0},\ldots,v_{0,k+1}$. We now remove a few edges inside the $G_i$'s and add a few edges
between them as follows. For $1 \le i \le k$, remove the edges $v_{i,0}v_{i,1}$ and $v_{i,0}v_{i,2}$
and  remove a perfect matching on the $t-3$ undesignated vertices of $G_i$. Notice that after removal,
each vertex of $G_i$ has degree $t-2$, except $v_{i,0}$ which has degree $t-3$.
Now consider $G_0$ and remove from it all edges of the form $v_{0,0}v_{0,j}$ for $1 \le j \le k+1$.
Also remove a perfect matching on the remaining $t-k-2$ undesignated vertices of $G_0$.
Notice that after removal,
each vertex of $G_0$ has degree $t-2$, except $v_{0,0}$ which has degree $t-k-2$.
Finally, add the edges $v_{0,0}v_{i,0}$ for $1 \le i \le k$. After addition, each vertex has degree 
precisely $t-2$. So the obtained graph $G$ is connected, has $n=(k+1)t$ vertices, and is $r$-regular for $r=n/(k+1)-2$. However, notice that any spanning tree of $G$ must contain all edges
$v_{0,0}v_{i,0}$ for $1 \le i \le k$ and must also contain at least one edge connecting $v_{0,0}$ to
another vertex in $G_0$. Thus, $v_{0,0}$ has degree at least $k+1$ in every spanning tree.

Suppose next that $k \ge 2$ is even and suppose that $t \ge k+5$ be odd.
We slightly modify the construction above. First, we now take $G_0$ to be $K_{t+1}$.
Now, there are $k+3$ designated vertices in $G_0$, denoted by $v_{0,0},\ldots,v_{0,k+2}$.
The removed edges from the $G_i$ for $1 \le i \le k$ stay the same.
The removed edges from $G_0$ are as follows. We remove all edges of the form
$v_{0,0}v_{0,j}$ for $1 \le j \le k+2$. We remove a perfect matching on the vertices
$v_{0,1},\ldots,v_{0,k+2}$. We also remove a Hamilton cycle on the $t-k-2$ undesignated vertices of $G_0$.
Finally, as before, add the edges $v_{0,0}v_{i,0}$ for $1 \le i \le k$. After addition, each vertex has degree 
precisely $t-2$. So the obtained graph $G$ is connected, has $n=(k+1)t+1$ vertices, and is $r$-regular for $r=(n-1)/(k+1)-2=\lfloor n/(k+1) \rfloor-2$.
However, notice that as before, $v_{0,0}$ has degree at least $k+1$ in every spanning tree.

\section{Proofs of Theorems \ref{t:nearly-regular} and \ref{t:non-regular}}\label{sec:nearly-regular}

In this section we prove Theorems \ref{t:nearly-regular} and \ref{t:non-regular}.
As their proofs are essentially identical, we prove them together.
We assume that $k \ge k_0$ where
$k_0$ is a sufficiently large absolute constant satisfying the claimed inequalities.
Although we do not try to optimize $k_0$, it is not difficult to see from the computations that it is a moderate value.

Consider some $\vec{G} \in {\cal H}(G)$. An ordered pair of distinct vertices $u,v \in V(G)$
is a {\em removable edge} if $\Gamma(u)=v$ (so in particular $uv \in E(G)$) and the in-degree of $v$ in $\vec{G}$ is at least $k-1$.

\begin{lemma}\label{l:prob-indeg-2}
	Suppose that $k \ge k_0$. Let $G$ be a graph with minimum degree at least $\delta = n/(k+1)$ and maximum degree at most 
	$\Delta = n(1-3\sqrt{\ln k/k})$. Then with probability at least $\frac{1}{2}$, $\vec{G}$ has
	at most $n/(14k)$ removable edges. The same holds if $G$ has minimum degree at least
	 $\delta^*=\frac{n}{k}(1+3\sqrt{\ln k/k})$ and unrestricted maximum degree.
\end{lemma}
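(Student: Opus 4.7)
The plan is to bound $\mathbb{E}[R]$, where $R$ denotes the number of removable edges, by $n/(28k)$; Markov's inequality then yields $\Pr[R > n/(14k)] \le 1/2$, as desired. First, rewrite $R$ in a convenient form. Since each $u \in V(G)$ yields exactly one candidate pair $(u, \Gamma(u))$, which is removable iff the in-degree of $\Gamma(u)$ in $\vec{G}$ is at least $k-1$,
$$
R \;=\; \sum_{u \in V(G)} \mathbf{1}\!\left[\, d^-(\Gamma(u)) \ge k-1 \,\right],
$$
where $d^-(\cdot)$ denotes in-degree in $\vec{G}$. Conditioning on $\Gamma(u) = v$ peels one unit off $v$'s in-degree, and since the $\Gamma(u')$ for $u' \ne u$ are independent of $\Gamma(u)$,
$$
\mathbb{E}[R] \;=\; \sum_{u} \sum_{v \in N(u)} \frac{1}{d(u)} \, \Pr\!\left[\, X_{u,v} \ge k-2 \,\right],
$$
where $X_{u,v} := \sum_{u' \in N(v) \setminus \{u\}} \mathbf{1}[\Gamma(u') = v]$ is a sum of $d(v)-1$ independent Bernoullis (parameter $1/d(u')$ at coordinate $u'$).

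The key combinatorial identity $\sum_u \sum_{v \in N(u)} 1/d(u) = \sum_u 1 = n$ reduces the whole task to bounding $\Pr[X_{u,v} \ge k-2]$ uniformly in $u,v$. Writing $\mu := \mathbb{E}[X_{u,v}] \le d(v)/\delta$, a short arithmetic step gives $\mu \le k - 3\sqrt{k \ln k}\,(1 - o_k(1))$ in either regime of the lemma: in the Theorem~\ref{t:nearly-regular} setting one uses $\Delta/\delta \le (k+1)(1 - 3\sqrt{\ln k/k})$, and in the Theorem~\ref{t:non-regular} setting $n/\delta^* \le k/(1 + 3\sqrt{\ln k/k})$. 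Consequently the gap $\lambda := k - 2 - \mu$ is at least $(3 - o_k(1))\sqrt{k \ln k}$, which dwarfs $\sqrt{\mu}$, so Bernstein's inequality applied to $X_{u,v}$ gives
$$
\Pr[X_{u,v} \ge k-2] \;\le\; \exp\!\left(-\frac{\lambda^2}{2\mu + 2\lambda/3}\right) \;\le\; \exp\!\left(-\frac{9k \ln k\,(1 - o_k(1))}{2k\,(1 + o_k(1))}\right) \;=\; k^{-9/2 + o_k(1)}.
$$

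Combining the last two displays, $\mathbb{E}[R] \le n \cdot k^{-9/2 + o_k(1)}$, which is well below $n/(28k)$ for all $k \ge k_0$ with $k_0$ a sufficiently large absolute constant, and Markov finishes. The main subtlety is the Bernstein calibration: the factor $3$ in the hypothesis $3\sqrt{\ln k/k}$ is exactly what pushes the Bernstein exponent to $\tfrac{9}{2} \ln k$, producing polynomial decay strictly faster than $1/k$ so that after summing over the $n$ vertices there is ample room to absorb the factor $k$ (and the constant $28$) coming from the Markov step.
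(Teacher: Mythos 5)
Your proof is correct and follows essentially the same route as the paper's: a first-moment bound over the ordered pairs $(u,\Gamma(u))$, using the independence of $\Gamma(u)$ from the other vertices' choices to reduce to a tail bound on the number of in-neighbors of $v$ coming from $N(v)\setminus\{u\}$ exceeding $k-2$, followed by Markov's inequality. The only cosmetic differences are that you apply Bernstein directly to the heterogeneous Bernoulli sum and keep the exact weights $1/d(u)$ (which sum to $n$), whereas the paper dominates the in-degree by a single binomial $\mathrm{Bin}(\Delta,1/\delta)$ (resp.\ $\mathrm{Bin}(n,1/\delta^*)$), uses the Chernoff-type bound $e^{-a^2/(2\mathbb{E}X)+a^3/(2(\mathbb{E}X)^2)}$, and counts crudely over fewer than $n^2$ pairs; both give an expected number of removable edges far below $n/(28k)$, so the conclusions agree.
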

\begin{proof}
	Consider some ordered pair of distinct vertices $u,v \in V(G)$ such that $uv \in E(G)$.
	For that pair to be a removable edge, it must hold
	that: (i) $\Gamma(u)=v$, and (ii) $v$ has at least $k-2$ in-neighbors in $N(v) \setminus u$. As (i)
	and (ii) are independent, and since $\Pr[\Gamma(u)=v] = 1/d(u)$, we need to estimate the number of in-neighbors of $v$ in $N(v) \setminus u$, which is clearly at most $v$'s in-degree in $\vec{G}$.
	So let $D_v$ be the random variable corresponding to $v$'s in-degree in $\vec{G}$.
	Observe that $D_v$ is the sum of independent random variables $D_v=\sum_{w \in N(v)}D_{v,w}$ where $D_{v,w}$ is the indicator variable for the event $\Gamma(w)=v$.
	
	Consider first the case where $G$ has minimum degree at least $\delta$ and maximum degree at most $\Delta$. In particular, $D_v \le X$ where $X \sim \mathrm{Bin}(\Delta,1/\delta)$.
	$$
	\mathbb{E}[X] =  \frac{\Delta}{\delta}=(k+1)(1-3\sqrt{\ln k/k})=k(1-o_k(1))\;.
	$$
	Now let $a=k-2-\mathbb{E}[X] = 3\sqrt{k \ln k}(1-o_k(1))$. Then by Chernoff's inequality (see \cite{AS-2004} Appendix A) we have that for sufficiently large $k$, 
	\begin{align}
	\Pr[D_v \ge k-2] & \le \Pr[X \ge k-2] = \Pr[X-\mathbb{E}[X] \ge a] \nonumber\\
	& \le e^{-a^2/(2\mathbb{E}[X]) + a^3/(2(\mathbb{E}[X])^2)} \nonumber\\
	& \le e^{-(1-o_k(1))9k\ln k/(2k) + (1+o_k(1))27k^{3/2}\ln^{3/2} k/(2k^2)} \nonumber\\
	& \le \frac{1}{k^4} \label{e:1}
	\end{align}
where the last inequality holds for $k \ge k_0$.
	It follows that the probability that $u,v$ is a removable edge is at most
	$(1/d(u))/k^4 \le 1/(\delta k^4) \le 1/(nk^2)$.
	
	Consider next the case where $G$ has minimum degree at least $\delta^*$.
	In particular, $D_v \le X$ where $X \sim \mathrm{Bin}(n,1/\delta^*)$.
	$$
	\mathbb{E}[X] =  \frac{n}{\delta^*}=\frac{k}{1+3\sqrt{\ln k/k}}=k(1-o_k(1))\;.
	$$
	Now let $a=k-2-\mathbb{E}[X] = 3\sqrt{k \ln k}(1-o_k(1))$. So as in \eqref{e:1}, we obtain that $\Pr[D_v \ge k-2] \le 1/k^4$.
	It follows that the probability that $u,v$ is a removable edge is at most $1/(\delta^*k^4) \le 1/(nk^2)$.
	
	As there are fewer than $n^2$ ordered pairs to consider, the expected number of removable edges is
	in both cases is at most $n/k^2$. By Markov's inequality, with probability at least $\frac{1}{2}$, $\vec{G}$ has at most $2n/k^2 \le n/(14k)$ removable edges.
\end{proof}

\begin{lemma}\label{l:red-forest-2}
	Suppose that $k \ge k_0$. Let $G$ be a graph with minimum degree at least $\delta = n/(k+1)$ and maximum degree at most 
	$\Delta = n(1-3\sqrt{\ln k/k})$.
	With probability at least $\frac{1}{4}$, $\vec{G}$ has a spanning forest $F$ such that:\\
	(a) $F$ has maximum in-degree at most $k-2$.\\
	(b) $F$ has at least $n-n/(7k)$ edges.\\
	The same holds if $G$ has minimum degree at least
	$\delta^*=\frac{n}{k}(1+3\sqrt{\ln k/k})$ and unrestricted maximum degree.
\end{lemma}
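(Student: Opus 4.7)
The plan is to mimic the structure of Lemma \ref{l:red-forest}, but replace the call to Lemma \ref{l:removal} by the much sharper Lemma \ref{l:prob-indeg-2}, and then combine it with Lemma \ref{l:compoents} via a union bound. Concretely, I will argue that two independent ``bad'' events each have small probability: (i) $\vec{G}$ has more than $n/(14k)$ removable edges, and (ii) $\vec{G}$ has more than $4(k+1)\ln n$ components. By Lemma \ref{l:prob-indeg-2} event (i) fails with probability at most $1/2$, and by Lemma \ref{l:compoents} together with Markov's inequality event (ii) fails with probability at most $1/4$ (noting that in both regimes we have $\delta,\delta^*\ge n/(k+1)$, so Lemma \ref{l:compoents} applies). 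The union bound then yields probability at least $1/4$ that neither event occurs.

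Conditioned on (i) and (ii), I will construct $F$ in two cleanup steps. First, delete from $\vec{G}$ every removable edge. By the very definition of ``removable,'' any vertex $v$ with in-degree $\ge k-1$ in $\vec{G}$ has \emph{every} one of its incoming edges marked removable, so after this step every vertex has in-degree at most $k-2$; moreover at most $n/(14k)$ edges are removed. Second, since every component of an out-degree one orientation contains exactly one directed cycle, the subgraph obtained so far can be made acyclic by deleting at most one further edge per component, i.e., at most $4(k+1)\ln n$ edges. Deleting edges can only decrease in-degrees, so the final graph $F$ is a forest with maximum in-degree at most $k-2$, establishing property (a).

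For property (b), observe that the total number of deleted edges is at most
\[
\frac{n}{14k} + 4(k+1)\ln n \;\le\; \frac{n}{7k},
\]
where the inequality holds for $n$ sufficiently large in terms of $k$ (consistent with our standing assumption). Hence $F$ has at least $n-n/(7k)$ edges, giving property (b). The argument is identical in the second regime (minimum degree $\delta^*$, unrestricted maximum degree), since Lemma \ref{l:prob-indeg-2} explicitly covers that case and Lemma \ref{l:compoents} only requires minimum degree at least $n/(k+1)$, which $\delta^*$ comfortably satisfies. There is no serious obstacle here; the only point that deserves care is the justification that deleting all removable edges suffices to force max in-degree at most $k-2$ (rather than merely reducing it by one at each offending vertex), which is immediate from the definition.
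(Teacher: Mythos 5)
Your proposal is correct and follows essentially the same route as the paper: bound the number of removable edges via Lemma \ref{l:prob-indeg-2} and the number of components via Lemma \ref{l:compoents} plus Markov, take a union bound to get probability $1/4$, then delete all removable edges (forcing in-degree at most $k-2$) and one edge per cycle, with the total deletion count at most $n/(14k)+4(k+1)\ln n \le n/(7k)$. The only difference is cosmetic (your explicit remark that every in-edge of a high in-degree vertex is removable is implicit in the paper's argument), so there is nothing further to add.
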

\begin{proof}
	By Lemma \ref{l:compoents}, with probability at most  $\frac{1}{4}$ we have that $\vec{G}$ has more than $4(k+1)\ln n$ components. Recalling that in $\vec{G}$ each component can be made
	a tree by removing a single edge from its unique directed cycle, with probability at least  $\frac{3}{4}$ we have that $\vec{G}$ can be made acyclic by removing at most  $4(k+1)\ln n$
	edges.
	By Lemma \ref{l:prob-indeg-2}, with probability at least $\frac{1}{2}$, $\vec{G}$ has
	at most $n/(14k)$ removable edges. So, with probability at least $\frac{3}{4}-\frac{1}{2}=\frac{1}{4}$
	we have a forest subgraph of $\vec{G}$ with at least $n-4(k+1)\ln n - n/(14k) \ge n-n/(7k)$ edges in which all removable edges have been removed.
	But observe that after removing the removable edges, each vertex has in-degree at most $k-2$.
\end{proof}

Using the two-stage model, consider the graphs $\vec{G}_1,\vec{G}_2$ as denoted in Section \ref{sec:model}.
For a given $k \ge k_0$, we say that $\vec{G}_1$ is {\em successful}
if it has a spanning forest as guaranteed by Lemma \ref{l:red-forest-2}.
By that lemma, with probability at least $\frac{1}{4}$, we have that $\vec{G}_1$ is successful.
Assuming it is successful, designate a spanning forest $F_1$
of it satisfying the properties of Lemma \ref{l:red-forest-2}. Let $X_1 \subset V(G)$ be the set of vertices with out-degree $0$ in $F_1$. Thus, we have by Lemma \ref{l:red-forest-2} that $|X_1| \le n/(7k)$.
Consider the set of edges of the $\vec{G}_2$ emanating from $X_1$, denoting them by
$E_2 = \{(v,\Gamma_2(v))\,|\, v \in X_1\}$. By adding $E_2$ to $F_1$ we therefore obtain an out-degree one orientation of $G$, which we denote by $E_2 \cup F_1$. The following lemma is analogous to Lemma \ref{l:successful-not3}.

\begin{lemma}\label{l:successful-2}
	Given that $\vec{G}_1$ is successful, and given the corresponding forest $F_1$,
	the probability that $(E_2 \cup F_1) \in {\cal H}_{k,n/(7k)}(G) \cap {\cal H}^*_{\ln n}(G)$ is at least
	$(\frac{5}{6})^{n/(7k)}$.
\end{lemma}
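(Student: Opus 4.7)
The plan is to mimic the inductive argument of Lemma \ref{l:successful-not3}, but with the goodness invariant weakened to match the weaker guarantees from Lemma \ref{l:red-forest-2} (which only gives maximum in-degree $k-2$ and $|X_1| \le n/(7k)$, without a count of in-degree-zero $G$-neighbors). Fix an arbitrary ordering $v_1,\ldots,v_{|X_1|}$ of $X_1$ and expose the edges $(v_h,\Gamma_2(v_h))$ one at a time, writing $E_{2,h} \cup F_1$ for the partial digraph after $h$ exposures. Call $E_{2,h} \cup F_1$ \emph{good} if (i) every vertex has in-degree at most $k-1$, (ii) at most $h$ vertices have in-degree exactly $k-1$, and (iii) every component with fewer than $n/\ln n$ vertices is a tree. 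Since $F_1$ has maximum in-degree at most $k-2$ and is a forest, $E_{2,0} \cup F_1 = F_1$ is good.

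Assume $E_{2,h} \cup F_1$ is good, and consider exposing the out-edge of $v_{h+1}$. By condition (ii), at most $h \le |X_1| \le n/(7k)$ of $v_{h+1}$'s $G$-neighbors currently have in-degree $k-1$; adding an edge to any of these would violate (i). Additionally, if the current component of $v_{h+1}$ has fewer than $n/\ln n$ vertices, exclude its at most $n/\ln n$ vertices to prevent creating a new small cyclic component. The remaining ``safe'' set $S$ of $G$-neighbors therefore satisfies $|S| \ge d(v_{h+1}) - n/(7k) - n/\ln n \ge n/(k+1) - n/(7k) - n/\ln n$, using the minimum-degree hypothesis. Consequently,
\[
\Pr[\Gamma_2(v_{h+1}) \in S] \;\ge\; 1 - \frac{n/(7k) + n/\ln n}{n/(k+1)} \;=\; 1 - \frac{k+1}{7k} - o_n(1) \;\ge\; \frac{5}{6},
\]
where the last inequality holds for $k \ge k_0$ (as $1 - 1/7 = 6/7 > 5/6$). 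Whenever $\Gamma_2(v_{h+1}) \in S$, the new edge does not close a small cycle, raises $\Gamma_2(v_{h+1})$'s in-degree from at most $k-2$ to at most $k-1$, and adds at most one to the count of vertices with in-degree $k-1$, so $E_{2,h+1} \cup F_1$ remains good.

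Iterating over all $|X_1| \le n/(7k)$ exposures, the final digraph $E_2 \cup F_1$ is good with probability at least $(5/6)^{n/(7k)}$. To finish, it suffices to observe that goodness exactly matches membership in ${\cal H}_{k,n/(7k)}(G) \cap {\cal H}^*_{\ln n}(G)$: condition (i) places it in ${\cal H}_k(G)$, condition (ii) (with $h = |X_1| \le n/(7k)$) bounds the number of in-degree-$(k-1)$ vertices by $n/(7k)$, and condition (iii) forces every directed cycle to lie in a component of size at least $n/\ln n$, limiting the number of components (and hence of directed cycles) to at most $\ln n$. The only point requiring care is ensuring that the constant $5/6$ is achievable; this is why we appeal to the assumption $k \ge k_0$, and it is the single computation on which the cleanliness of the bound depends.
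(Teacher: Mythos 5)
Your proof is correct and follows essentially the same route as the paper: the same goodness invariant (in-degree at most $k-1$, at most $h$ vertices of in-degree $k-1$, small components are trees), the same one-edge-at-a-time exposure with the bound $1-\frac{n/(7k)+n/\ln n}{n/(k+1)}\ge \frac{5}{6}$, and the same identification of goodness with membership in ${\cal H}_{k,n/(7k)}(G)\cap{\cal H}^*_{\ln n}(G)$. The only cosmetic difference is that you bound the number of blocked neighbors by $n/(7k)$ up front rather than by $h$, which changes nothing.
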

\begin{proof}
	Fix an arbitrary ordering of the vertices of $X_1$, say $v_1,\ldots,v_{|X_1|}$. We consider the edges $(v_i,\Gamma_2(v_i))$ one by one, and let $E_{2,i} \cup F_1$ be the graph obtained after adding to
	$F_1$ the edges $(v_j,\Gamma_2(v_j))$ for $1 \le j \le i$. Also let $E_{2,0} = \emptyset$.
	We say that $E_{2,i} \cup F_1$ is {\em good} if it satisfies the following two properties:\\
	(i) The in-degree of each vertex in $E_{2,i} \cup F_1$ is at most $k-1$.\\
	(ii) Every component of $E_{2,i} \cup F_1$ with fewer than $n/\ln n$ vertices is a tree.\\
	(iii) The number of vertices in $E_{2,i} \cup F_1$ with in-degree $k-1$ is at most $i$.
	
	Note that $E_{2,0} \cup F_1=F_1$ is good, since $F_1$ is a forest where the in-degree of each vertex is at most $k-2$. We estimate the probability that $E_{2,i+1} \cup F_1$ is good given that
	$E_{2,i} \cup F_1$ is good. By our assumption,
	$v_{i+1}$ has at most $i$ neighbors with in-degree $k-1$ in $E_{2,i} \cup F_1$.
	Thus, there is a subset $S$ of at least $d(v_{i+1})-i$ neighbors of $v_{i+1}$ in $G$ which still have in-degree at most $k-2$ in $E_{2,i} \cup F_1$.
	As in Lemma \ref{l:successful-not3}, we may further delete at most $n/\ln n$ vertices from $S$ in case the component of $v_{i+1}$ in $E_{2,i} \cup F_1$ has fewer than $n/\ln n$ vertices
	so that in any case we have that $|S| \ge d(v_{i+1})-i- n/\ln n$. The probability that $\Gamma_2(v_{i+1}) \in S$ is therefore at least
	$$
	\frac{d(v_{i+1})-i-\frac{n}{\ln n}}{d(v_{i+1})} \ge 1 - \frac{i+\frac{n}{\ln n}}{\frac{n}{k+1}}
		\ge 1 - \frac{\frac{n}{7k}+\frac{n}{\ln n}}{\frac{n}{k+1}}
		\ge \frac{5}{6}
	$$
	(note that $d(v_{i+1}) \ge n/(k+1)$ trivially holds also in the assumption of Theorem \ref{t:non-regular}).
	To have $\Gamma_2(v_{i+1}) \in S$ means that we are not creating any new components of size smaller than $n/\ln n$ and that $E_{2,i+1} \cup F_1$ has at most $i+1$ vertices with in-degree
	$k-1$. In other words, it means that $E_{2,i+1} \cup F_1$ is good.
	We have therefore proved that the final $E_2 \cup F_1$ is good with probability at least
	$$
	\left(\frac{5}{6}\right)^{|X_1|} \ge \left(\frac{5}{6}\right)^{n/(7k)}\;.
	$$
	Finally, note that the goodness of $E_2 \cup F_1$ means that it is in ${\cal H}_{k,n/(7k)}(G) \cap {\cal H}^*_{\ln n}(G)$.
\end{proof}

\begin{lemma}\label{l:nearly-regular}
	$$
	P_{k,n/(7k),\ln n}(G)^{1/n} \ge (1-o_n(1))	
	\left(1-\frac{1}{7k}\right)^{1-\frac{1}{7k}}\left(\frac{1}{9k}\right)^{\frac{1}{7k}} = (1-o_n(1))	z^*_k\;.
	$$
\end{lemma}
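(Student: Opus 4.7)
The proof follows the template already established for Lemma \ref{l:regular} (the case $k \ge 5$ of Theorem \ref{t:regular}), and essentially just combines Lemmas \ref{l:red-forest-2} and \ref{l:successful-2} through the two-stage model with a carefully chosen mixing parameter. The plan is to run the two-stage model with $p_1 = 1 - \frac{1}{7k}$ and $p_2 = \frac{1}{7k}$, noting that the final $\vec{G}$ is uniform over $\mathcal{H}(G)$, so any lower bound on the probability of producing an element of $\mathcal{H}_{k,n/(7k)}(G) \cap \mathcal{H}^*_{\ln n}(G)$ via a \emph{particular} construction procedure is a valid lower bound on $P_{k,n/(7k),\ln n}(G)$.

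The first step is to invoke Lemma \ref{l:red-forest-2}: with probability at least $\tfrac{1}{4}$, $\vec{G}_1$ is successful and yields a spanning forest $F_1$ of $\vec{G}_1$ with maximum in-degree at most $k-2$ and at least $n - n/(7k)$ edges. Conditioned on this event, the set $X_1$ of vertices of out-degree $0$ in $F_1$ satisfies $|X_1| \le n/(7k)$. The second step is to apply Lemma \ref{l:successful-2}: conditioned on $\vec{G}_1$ being successful, the orientation $E_2 \cup F_1$ lies in $\mathcal{H}_{k,n/(7k)}(G) \cap \mathcal{H}^*_{\ln n}(G)$ with probability at least $(5/6)^{n/(7k)}$.

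The third step computes the probability that the second stage of the two-stage model actually commits to the orientation $E_2 \cup F_1$: each $v \in V(G) \setminus X_1$ must choose its $\vec{G}_1$-out-neighbor (probability $p_1$ each) and each $v \in X_1$ must choose its $\vec{G}_2$-out-neighbor (probability $p_2$ each). Since $|X_1| \le n/(7k)$ and the function $p_1^{n-x} p_2^x$ is decreasing in $x$ when $p_2 < p_1$, this probability is at least $\bigl(1 - \tfrac{1}{7k}\bigr)^{n(1 - 1/(7k))} \bigl(\tfrac{1}{7k}\bigr)^{n/(7k)}$. Chaining the three bounds yields
\[
P_{k,n/(7k),\ln n}(G) \;\ge\; \tfrac{1}{4}\,\Bigl(\tfrac{5}{6}\Bigr)^{n/(7k)} \Bigl(1-\tfrac{1}{7k}\Bigr)^{n(1-1/(7k))} \Bigl(\tfrac{1}{7k}\Bigr)^{n/(7k)}.
\]

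Taking the $n$-th root absorbs the $\tfrac{1}{4}$ into the $(1-o_n(1))$ factor, and it remains only to observe that $\tfrac{5}{6} \cdot \tfrac{1}{7k} = \tfrac{5}{42k} \ge \tfrac{1}{9k}$ (since $45 \ge 42$), so the combined base $\bigl(\tfrac{5}{6}\bigr)^{1/(7k)}\bigl(\tfrac{1}{7k}\bigr)^{1/(7k)}$ is at least $\bigl(\tfrac{1}{9k}\bigr)^{1/(7k)}$. This gives exactly the stated lower bound $(1-o_n(1))\,z^*_k$. There is no serious obstacle here: all the nontrivial work (the nibble for the forest, the completion, and the tail estimates on removable edges) is done in Lemmas \ref{l:red-forest-2} and \ref{l:successful-2}; this lemma is essentially an assembly and an optimization of the two-stage mixing weights, with the $\tfrac{1}{9k}$ merely a clean underestimate of $\tfrac{5}{42k}$.
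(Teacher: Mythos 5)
Your proposal is correct and follows essentially the same route as the paper: condition on $\vec{G}_1$ being successful (Lemma \ref{l:red-forest-2}), apply Lemma \ref{l:successful-2}, lower-bound the second-stage commitment probability by $(1-\frac{1}{7k})^{n(1-1/(7k))}(\frac{1}{7k})^{n/(7k)}$ with $p_2=\frac{1}{7k}$, and absorb the factor $\frac{5}{6}$ into the base via $\frac{5}{42k}\ge\frac{1}{9k}$. This matches the paper's proof step for step, including the final numerical absorption.
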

\begin{proof}
	Considering the two-stage model, we have by Lemma \ref{l:red-forest-2} that with probability at least $\frac{1}{4}$, $\vec{G}_1$  is successful. Thus, by Lemma \ref{l:successful-2}, with probability at least
	$$
	\frac{1}{4}\left(\frac{5}{6}\right)^{n/(7k)}
	$$
	the following holds:
	There is an out-degree one orientation $\vec{G}$ consisting of $x \ge n-n/(7k)$ edges of $\vec{G}_1$
	and hence at most $n/(7k)$ edges of $\vec{G}_2$, which is in ${\cal H}_{k,n/(7k)}(G) \cap {\cal H}^*_{\ln n}(G)$.
	Assuming that this holds, let $X$ be the set of vertices whose out-edge in $\vec{G}$ is from $\vec{G}_1$.
	Now, let $p_1+p_2=1$ be the probabilities associated with the two-stage model where we assume $p_2 < \frac{1}{2}$. The probability that in the second part of the two-stage model,
	each vertex $v \in X$ will indeed choose $\Gamma_1(v)$ and each vertex $v \in V(G) \setminus X$ will indeed choose $\Gamma_2(v)$ is precisely
	$$
	p_1^{x}p_2^{n-x} \ge (1-p_2)^{n-n/(7k)}{p_2}^{n/(7k)}\;.
	$$
	Optimizing, we will choose $p_2=1/(7k)$.
	Recalling that the final outcome of the two-stage model is a completely random element of ${\cal H}(G)$,
	we have that
	$$
	P_{k,n/(7k),\ln n}(G) \ge
	\frac{1}{4}\left(\frac{5}{6}\right)^{\frac{n}{7k}}
	\left(1-\frac{1}{7k}\right)^{n-\frac{n}{7k}}\left(\frac{1}{7k}\right)^{\frac{n}{7k}} \ge
	\frac{1}{4}
	\left(1-\frac{1}{7k}\right)^{n-\frac{n}{7k}}\left(\frac{1}{9k}\right)^{\frac{n}{7k}} \;.
	$$
	Taking the $n$'th root from both sides yields the lemma.
\end{proof}

\begin{proof}[Proof of Theorems \ref{t:nearly-regular} and \ref{t:non-regular}]
	Combining Lemma \ref{l:nearly-regular} and Lemma \ref{l:completing} we have that
	$$
	c_k(G)^{1/n} \ge (1-o_n(1))d(G)^{1/n}z^*_k\;.
	$$
\end{proof}

\section*{Acknowledgment}
The author thanks the referees for their useful comments.

\end{document}